\def\dfrac{\displaystyle\frac}
\def\dint{\displaystyle\int}
\def\dsum{\displaystyle\sum}
\def\dlim{\displaystyle\lim}
\def\esssup{\mathrm{ess}\sup}
\def\F{{\mathcal F}}
\def\D{\displaystyle}
\newtheorem{theorem}{Theorem}
\newtheorem{lemma}{Lemma}
\newtheorem{proposition}{Proposition}
\newtheorem{remark}{Remark}
\begin{document}

\def\second {\vtop{\baselineskip=11pt
\hbox to 125truept{\hss Shigeyoshi OGAWA\thanks{Dept. of Math. Science, 
Ritsumeikan University, ogawa-s@se.ritsumei.ac.jp}\hss}
}}
\def\third {\vtop{\baselineskip=11pt
\hbox to 125truept{\hss Hideaki UEMURA\thanks{Dept. of Math. Education,
Aichi University of Education, huemura@auecc.aichi-edu.ac.jp}\hss}
}}
\title{{Identification of noncausal It\^{o} processes from the stochastic Fourier 
coefficients II
}}

\author{\second\and\third}
\date{}

\maketitle
%
%\footnote[0]{2010 Mathematics Subject Classification : 60H05, 42A61, 60H07}
%

\section{Introduction} 
Let $a(t,\omega)$ be a random function on $[0,1]\times\Omega$, $(\Omega, \F,P)$ being a probability space.
Let  $\{\varphi_n(t)\}$ be an orthonormal basis in $L^2([0,1])$. 
We consider the following stochastic integral;
\begin{equation}\label{eq:SFC}
\mathcal{F}_n(a)=\int_0^1 \overline{\varphi_n(t)}a(t,\omega)dW_t, 
\end{equation}
where  $W.$ is a standard real Brownian motion and $\overline{\varphi_n(t)}$ denotes the complex conjugate of  $\varphi_n(t)$.
(We will soon give several kinds of conditions on $a(t,\omega)$ and the meaning of the stochastic integral.)
We call $\mathcal{F}_n(a)$ the stochastic Fourier coefficient (SFC in abbr.) of $a(t,\omega)$.
Our problem is "to
reconstruct $a(t,\omega)$ from the complete set of SFCs $\{\mathcal{F}_n(a)\}$".

\

In the preceding articles \cite{ogawa, ogawa-uemura}, we have studied this 
problem in such special case  
that the function $a(t,\omega)$ 
is restricted to the square integrable Wiener functional, namely; \ \  
We take a Wiener space for $(\Omega,\F, P)$ and denote by ${\mathcal L}^2$ the subclass 
$L^2([0,1]\times \Omega, dt\times dP)$ of all square integrable functions. 
We call the element $a(t,\omega)$ of ${\mathcal L}^2$ a square integrable Wiener functional and in particular a causal Wiener functional provided that $a(t,\omega)$ is adapted to the natural filtration $\{\F_t\}_{t>0}$ associated to the Brownian motion. 
We assume that an orthonormal basis $\{\varphi_n\}$ in $L^2(0,1)$ satisfies 
\begin{itemize} 
\item[(A)] \qquad \qquad $\D{\sup_{t \in [0,1]}}|\varphi_n(t)| < \infty \quad \mbox{for all}\quad n$.
\end{itemize}
In this case the SFC \eqref{eq:SFC} is defined by It\^{o} integral.
\begin{theorem}[case of causal functions, \cite{ogawa}]\label{th-0}
For any orthonormal basis $\{\varphi_n\}$ in $L^2(0,1)$ satisfying Condition (A),
every causal Wiener functional $a(t,\omega)\in{\mathcal L}^2$ can be reconstructed from the complete set of SFCs of the $a(t,\omega)$.
\end{theorem}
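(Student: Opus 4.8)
The plan is to avoid the naive Fourier inversion $\sum_n\mathcal{F}_n(a)\varphi_n(s)$, which cannot converge to $a(s,\omega)$: the SFCs carry a hidden $dW_t$, so an inversion in $s$ would have to produce the nonexistent ``density of $a\,dW$ against $ds$''. Instead I would first reconstruct the It\^o integral process $M_t:=\int_0^t a(s,\omega)\,dW_s$ from the family $\{\mathcal{F}_n(a)\}$, and only then extract $a$ from $M$. The bridge is the antiderivative $\Psi_n(t):=\int_0^t\varphi_n(u)\,du$ together with the reconstruction series
\[
A_N(t,\omega):=\sum_{n=1}^N \mathcal{F}_n(a)\,\Psi_n(t).
\]
Here Condition (A) already plays its first role: since each $\varphi_n$ is bounded, $E\int_0^1|\varphi_n|^2a^2\,dt\le\|\varphi_n\|_\infty^2\|a\|_{\mathcal{L}^2}^2<\infty$, so every $\mathcal{F}_n(a)$ is a genuine element of $L^2(\Omega)$ and $A_N$ is well defined.

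The key identity comes from completeness. Expanding $\mathbf{1}_{[0,t]}$ in the (also orthonormal) conjugate basis $\{\overline{\varphi_n}\}$ gives $\mathbf{1}_{[0,t]}(s)=\sum_n\Psi_n(t)\,\overline{\varphi_n(s)}$ in $L^2(0,1)$, since $\int_0^t\varphi_n=\Psi_n(t)$. Pulling the finite sum inside the It\^o integral,
\[
A_N(t,\omega)=\int_0^1 g_N^{(t)}(s)\,a(s,\omega)\,dW_s,\qquad g_N^{(t)}(s):=\sum_{n=1}^N\Psi_n(t)\,\overline{\varphi_n(s)},
\]
so $g_N^{(t)}$ is precisely the $N$-th partial sum of the Fourier expansion of $\mathbf{1}_{[0,t]}$. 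As $g_N^{(t)}\to\mathbf{1}_{[0,t]}$ in $L^2(ds)$, I would expect $A_N(t,\omega)\to\int_0^1\mathbf{1}_{[0,t]}(s)\,a(s,\omega)\,dW_s=M_t$ in $L^2(\Omega)$ for each $t$, thereby recovering the whole path $t\mapsto M_t$ from the SFCs.

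The hard part will be this convergence, which is not free. By the It\^o isometry,
\[
E\,|A_N(t)-M_t|^2=\int_0^1\bigl|g_N^{(t)}(s)-\mathbf{1}_{[0,t]}(s)\bigr|^2\rho(s)\,ds,\qquad \rho(s):=E[a(s)^2]\in L^1(0,1),
\]
so what is really needed is convergence of the partial Fourier sums \emph{in the weighted space} $L^2(\rho\,ds)$, whereas completeness only supplies convergence in $L^2(ds)$. For $\rho\in L^\infty$ (in particular for bounded $a$) the two coincide and there is nothing to prove; the difficulty is the general $\mathcal{L}^2$ case, where $\rho$ may be unbounded and the $L^2$-null sequence $|g_N^{(t)}-\mathbf{1}_{[0,t]}|^2$ could concentrate where $\rho$ is large. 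I would treat this by first establishing the theorem for bounded (e.g.\ truncated) causal integrands and then upgrading to general $a\in\mathcal{L}^2$, either through a uniform bound on the reconstruction operator or by replacing the partial sums with their Ces\`aro (Fej\'er) means, whose positivity yields the domination $|g_N^{(t)}|\le 1$ needed to run dominated convergence against $\rho\in L^1$.

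Finally, once $M_t=\int_0^t a\,dW_s$ has been recovered, I would extract $a$ using the Brownian motion itself, which is part of the data: since $dM_t=a(t)\,dW_t$, the quadratic covariation satisfies $\langle M,W\rangle_t=\int_0^t a(s,\omega)\,ds$, whence $a(t,\omega)=\tfrac{d}{dt}\langle M,W\rangle_t$ for a.e.\ $t$, a.s., and $\langle M,W\rangle$ is the limit in probability of the Riemann sums $\sum_i (M_{t_{i+1}}-M_{t_i})(W_{t_{i+1}}-W_{t_i})$. This completes the reconstruction. The underlying identification is in fact clean and sidesteps the delicate convergence above: for deterministic $f$ the map $f\mapsto\int_0^1\overline{f}\,a\,dW_s$ is an isometry of $L^2(\rho\,ds)$ into $L^2(\Omega)$, so $\mathcal{F}_n(a)=0$ for all $n$ forces $\|\varphi_n\|_{L^2(\rho\,ds)}=0$, i.e.\ each $\varphi_n$ vanishes $\rho\,ds$-a.e.; completeness of $\{\varphi_n\}$ in $L^2(ds)$ then forces $\{\rho>0\}$ to be Lebesgue-null, that is $a=0$ in $\mathcal{L}^2$.
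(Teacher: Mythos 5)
Your proposal really contains two arguments of different strength, and it helps to separate them. The closing injectivity argument is correct and complete: for causal $a$ the It\^o isometry gives $E|\mathcal{F}_n(a)|^2=\int_0^1|\varphi_n(s)|^2\rho(s)\,ds$ with $\rho(s)=E[|a(s,\omega)|^2]$, so vanishing of all SFCs forces each $\varphi_n$ to vanish $\rho\,ds$-a.e.; then $\mathbf{1}_{\{\rho>0\}}$ is orthogonal in $L^2(ds)$ to every $\varphi_n$, hence null by completeness, and by linearity the SFC map is injective on causal elements of $\mathcal{L}^2$, for every basis satisfying (A). This is a genuinely different route from the paper's: Theorem \ref{th-0} is quoted from \cite{ogawa}, where it is proved by reconstructing all kernel functions of the Wiener chaos decomposition of $a$ by means of appropriate multiple Wiener integrals, i.e.\ by an explicit inversion scheme. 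Your isometry argument is shorter and more elementary, but it yields only unique determination (identifiability), not a procedure producing $a$ from the data $\{\mathcal{F}_n(a)\}$, which is what the chaos-kernel method (and, in this paper, the Bohr-product formula for the noncausal case) actually delivers.

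The constructive scheme that occupies most of your proposal --- recover $M_t=\int_0^t a\,dW_s$ from $A_N(t)=\sum_{n\le N}\mathcal{F}_n(a)\Psi_n(t)$, then get $a$ from $\langle M,W\rangle$ --- has a genuine gap exactly where you locate it, and neither proposed repair closes it for the theorem as stated. The theorem concerns an \emph{arbitrary} orthonormal basis satisfying (A), and Condition (A) is only a bound on each $\varphi_n$ separately (not uniform in $n$), so it gives no control on the partial-sum kernels $g_N^{(t)}$. The Fej\'er/Ces\`aro repair rests on positivity of the Fej\'er kernel, a special property of the trigonometric system with no analogue for a general ONB, so it proves only the trigonometric case (where, in fact, even the Dirichlet partial sums of $\mathbf{1}_{[0,t]}$ are uniformly bounded and the argument goes through directly). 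The truncation/density repair is circular: the data are the SFCs of $a$, not of its truncations $a_k$, so you must apply the same operators to $\{\mathcal{F}_n(a)\}$ and split $A_N=\sum_{n\le N}\mathcal{F}_n(a_k)\Psi_n+\sum_{n\le N}\mathcal{F}_n(a-a_k)\Psi_n$; controlling the second term uniformly in $N$ amounts to $\sup_N\int_0^1|g_N^{(t)}(s)|^2\,E[|a(s)-a_k(s)|^2]\,ds\to0$ as $k\to\infty$, which is precisely the missing uniform bound, since for a general basis the partial-sum operators are not uniformly bounded on the weighted spaces $L^2(\rho\,ds)$ with $\rho\in L^1$. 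So, as a proof of reconstruction for all bases satisfying (A), your construction is incomplete; what survives in full generality is the identifiability argument, while the paper's method supplies the actual reconstruction.
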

After this result an extension to the case of noncausal Wiener functionals was 
established in \cite{ogawa-uemura} in the following way; 
Let ${\mathcal L}^{2,1} \subset{\mathcal L}^2$ be the subclass of all square integrable 
Wiener functionals for which the Skorokhod integral is well defined. 
We notice that for every $a(t,\omega)$ of the class 
${\mathcal L}^{2,1}$ its stochastic derivative $D_sa(t,\omega)$, introduced by A.Skorokhod 
\cite{skorokhod} is also well defined. 
We also assume that an orthonormal basis $\{\varphi_n\}$ in $L^2(0,1)$ satisfies Condition (A). 
For any element $a(t,\omega)$ of this subclass the SFC \eqref{eq:SFC} is defined by the Skorokhod integral
(See, for instance, Ogawa \& Uemura \cite{ogawa-uemura}, Nualart \cite{Nualart} for the definitions of ${\mathcal L}^{2,1}$, the Skorokhod integral and the stochastic derivative $D_sa(t,\omega)$).
Then we have the following results; 
\begin{theorem}[case of noncausal functions \cite{ogawa-uemura}]\label{th-1}~
\begin{enumerate}
\item \ \ Every $a(t,\omega)\in{\mathcal L}^{2,1}$ can be reconstructed from the set of SFCs $\{\F_n(a),$ $n \in{\bf Z}\}$ with respect to the complete system of trigonometric functions $e_n(t)=\exp(2\pi\sqrt{-1} nt), \ n \in {\bf Z}$.  
\item \ \ Moreover if the function $a(t,\omega) \in {\mathcal L}^{2,1}$ satisfies the 
condition 
\begin{equation*} \label{eq:th2}
\sup_{0 \leq t \leq 1}\left\{E[|a(t,\omega)|^2]+\int_0^1E[|D_sa(t,\omega)|^2]ds\right\} < \infty,
\end{equation*}
then the above result (i) holds for any orthonormal basis $\{\varphi_n\}$ satisfying Condition (A).
\end{enumerate}
\end{theorem}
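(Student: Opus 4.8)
The plan is to reconstruct $a$ by first recovering, from the SFCs alone, the whole family of Skorokhod integrals $M_s:=\int_0^s a(t,\omega)\,dW_t=\int_0^1 \mathbf{1}_{[0,s]}(t)\,a(t,\omega)\,dW_t$, $s\in[0,1]$, and then inverting the Skorokhod integral. The starting point is the expansion of the indicator in the conjugate orthonormal system $\{\overline{\varphi_n}\}$: writing $c_n(s)=\int_0^s\varphi_n(u)\,du$, one has $\mathbf{1}_{[0,s]}=\sum_n c_n(s)\overline{\varphi_n}$ in $L^2([0,1])$, since $\langle \mathbf{1}_{[0,s]},\overline{\varphi_n}\rangle=\int_0^s\varphi_n=c_n(s)$. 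If one may integrate this expansion term by term against $a(t,\omega)\,dW_t$, then
\begin{equation*}
M_s=\sum_n c_n(s)\int_0^1\overline{\varphi_n(t)}\,a(t,\omega)\,dW_t=\sum_n c_n(s)\,\F_n(a),
\end{equation*}
so that the process $\{M_s\}$ is expressed through the SFCs with explicit deterministic coefficients.

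The crux of the proof is to justify this term-by-term Skorokhod integration. Since the Skorokhod integral is a closed operator on ${\mathcal L}^{2,1}$, it suffices to show that the partial sums $(P_N\mathbf{1}_{[0,s]})\,a$, where $P_N\mathbf{1}_{[0,s]}=\sum_{|n|\le N}c_n(s)\overline{\varphi_n}$, converge to $\mathbf{1}_{[0,s]}\,a$ in the graph norm $\|u\|_{2,1}^2=E\!\int_0^1|u|^2dt+E\!\int_0^1\!\int_0^1|D_ru(t)|^2dr\,dt$. Because $P_N\mathbf{1}_{[0,s]}$ is deterministic, $D_r[(P_N\mathbf{1}_{[0,s]})a]=(P_N\mathbf{1}_{[0,s]})D_ra$, so the graph-norm distance reduces to $\int_0^1|P_N\mathbf{1}_{[0,s]}(t)-\mathbf{1}_{[0,s]}(t)|^2\big(E|a(t)|^2+\int_0^1E|D_ra(t)|^2dr\big)dt$. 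For part (i), with $\varphi_n=e_n$, the decisive special feature is that $\mathbf{1}_{[0,s]}$ has bounded variation, so its Dirichlet partial sums $P_N\mathbf{1}_{[0,s]}$ are uniformly bounded in $N$ and $t$ and converge to $\mathbf{1}_{[0,s]}$ at every continuity point; dominated convergence, with dominating function a constant multiple of $E|a(t)|^2+\int_0^1E|D_ra(t)|^2dr\in L^1(dt)$, then gives the convergence for every $a\in{\mathcal L}^{2,1}$.

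For part (ii) the uniform bound $K:=\sup_t\{E|a(t)|^2+\int_0^1E|D_sa(t)|^2ds\}<\infty$ lets one bound the same graph-norm distance by $K\,\|P_N\mathbf{1}_{[0,s]}-\mathbf{1}_{[0,s]}\|_{L^2([0,1])}^2$, which tends to $0$ for any complete orthonormal system, since $P_Nf\to f$ in $L^2$ for every such system. Thus the uniform integrability hypothesis removes the need for any uniform boundedness of the partial sums, and the argument goes through for every basis satisfying Condition (A); here (A) is used only to guarantee that $\overline{\varphi_n}\,a\in{\mathcal L}^{2,1}$, so that the SFCs are defined.

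It then remains to recover $a$ from the now-known process $\{M_s\}_{s\in[0,1]}$, i.e. to invert $a\mapsto\int_0^\cdot a\,dW_t$. Applying the commutation relation $D_rM_s=\mathbf{1}_{[0,s]}(r)\,a(r,\omega)+\int_0^s D_ra(t,\omega)\,dW_t$ isolates $a$ up to a Skorokhod-integral remainder; equivalently, passing to the Wiener chaos expansion $a(t)=\sum_kI_k(f_k(\cdot,t))$ shows that $M_s=0$ for all $s$ forces every symmetrized kernel $\widetilde{\mathbf{1}_{[0,s]}\otimes f_k}$ to vanish for all $s$, whence $f_k\equiv0$, yielding injectivity and hence identifiability of $a$. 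I expect the main obstacle to be the graph-norm convergence of the truncated expansions in part (i): everything hinges on the bounded-variation structure of $\mathbf{1}_{[0,s]}$ forcing uniform boundedness of its Dirichlet sums, which is precisely the property a general basis need not share and which the uniform moment bound of part (ii) is designed to compensate for.
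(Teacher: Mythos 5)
Your proposal is correct, but it follows a genuinely different route from the one the paper relies on. The paper (citing \cite{ogawa-uemura}) proves Theorem \ref{th-1} by reconstructing all kernel functions $f_k$ of the Wiener chaos decomposition $a(t,\omega)=\sum_k I_k(f_k(\cdot,t))$ directly from the SFCs, by pairing them with appropriate multiple Wiener integrals; the chaos expansion is the engine of the whole argument. You instead recover the indefinite Skorokhod integral $M_s=\int_0^1\mathbf{1}_{[0,s]}(t)a(t,\omega)\,dW_t$ from the SFCs via term-by-term integration of the expansion $\mathbf{1}_{[0,s]}=\sum_n c_n(s)\overline{\varphi_n}$, justified by continuity of the divergence in the graph norm of ${\mathcal L}^{2,1}$, and only then invoke injectivity of $a\mapsto\{M_s\}$. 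This buys a clean unification of the two parts: (i) and (ii) differ only in how the graph-norm convergence of $(P_N\mathbf{1}_{[0,s]})a$ is established (uniform boundedness of Dirichlet partial sums of the BV function $\mathbf{1}_{[0,s]}$ plus dominated convergence, versus the uniform moment bound plus plain $L^2$-convergence of projections for an arbitrary basis), which isolates exactly what is special about the trigonometric system -- a structural insight the kernel-by-kernel method leaves implicit. What the paper's route buys in exchange is explicitness: it reconstructs the kernels $f_k$, and hence $a$, by a concrete formula, whereas your argument reduces to an injectivity statement. One step you should spell out is the claim that $\widetilde{\mathbf{1}_{[0,s]}\otimes f_k}=0$ for all $s$ forces $f_k\equiv 0$: this is true and standard, and follows by fixing distinct $(t_1,\dots,t_{k+1})$ and letting $s$ sweep across the ordered coordinates, so that the terms of the symmetrization are killed one at a time (this also makes your recovery of $a$ from $\{M_s\}$ constructive, matching the ``reconstruction'' wording of the theorem); as written it is only asserted.
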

We note that Theorems \ref{th-0} and \ref{th-1} are verified by the reconstruction of all kernel functions of Wiener chaos decomposition for $a(t,\omega)$ by use of appropriate multiple Wiener integrals.

\ 

Let $a(t,\omega)\in\mathcal{L}^{2,1}$ and $b(t,\omega)\in\mathcal{L}^{2}$ be noncausal functions.
We next consider a noncausal It\^{o} process of the form $dX_t=b(t,\omega)dt+a(t,\omega)dW_t$,
where $dW_t$ is defined by the Skorokhod integral.
We set the SFC $\mathcal{F}_n(dX)$ of the differential $dX$ by
\begin{equation}\label{eq:riemann integral}
\F_n(dX)=\int_0^1 \overline{\varphi_n(t)}dX_t.
\end{equation}
Here we understand by the stochastic integral on the right hand side of 
\eqref{eq:riemann integral} the integral defined as the limit of Riemann 
sums,
$$
\lim_{|\Delta| \to 0}\sum_{t_i \in \Delta}\overline{\varphi_n(t_i)}\{X(t_{i+1})-X(t_i)\}, 
$$
where \ $\Delta=\{0=t_0<t_1<\cdots<t_n<t_{n+1}=1\}$ is a partition of the interval 
$[0,1]$ and \ $|\Delta|=\max_i\{t_{i+1}-t_i\}$. 
It is easy to see that \eqref{eq:riemann integral} coincides with
\begin{equation*}
\int_0^1 \overline{\varphi_n(t)}b(t,\omega)dt+ \int_0^1 \overline{\varphi_n(t)}a(t,\omega)dW_t 
\end{equation*}
if $\varphi_n(t)\in C([0,1])$.

Given these we are now concerned with the basic question whether we can identify the 
two parameters $a(\cdot,\omega)$ and $b(\cdot,\omega)$
of the system from the complete set of SFCs 
$\{\F_n(dX)\}$. 
Unfortunately, our "multiple Wiener integrals method" in \cite{ogawa, ogawa-uemura} is of no use to apply for $dX$, 
since we cannot distinguish between $a(\cdot,\omega)$ and $b(\cdot,\omega)$ by this method.

%%%
In  \cite{ogawa-uemura2}, we have discussed the reconstruction of noncausal It\^{o} process from its stochastic Fourier coefficients with respect to the complete system of trigonometric functions by use of the method of the Bohr convolution.
On that occasion we assume some regularities on $a(t,\omega)$ and $b(t,\omega)$
in order that the theory of Skorokhod integrals is applicable to corresponding noncausal It\^{o} process;
\begin{theorem}[case of noncausal It\^{o} process \cite{ogawa-uemura2}] \label{th-2}
Suppose $a(t,\omega) \in \mathcal{L}^{2,2}$ and $b(t,\omega) \in \mathcal{L}^{2,1}$.
Then 
every noncausal It\^{o} process can be identified by the complete set of the SFCs 
$\{{\mathcal F}_n(dX),n \in {\bf Z}\}$ of the differential $dX$ with respect to the complete system 
of trigonometric functions $\{e_n(t), \ n \in {\bf Z}\}$ where $e_n(t)=e^{2\pi\sqrt{-1}nt}$. 
More precisely, the Fourier coefficient (in the usual sense) $\tilde{a}_n$ of the 
coefficient $a(t,\omega)$ is determined by the following formula of Bohr product;
\begin{equation*} 
\begin{array}{l}
\D{\lim_{N \rightarrow \infty}\frac{1}{2N+1}\sum_{k+\ell=n, |\ell|\leq N}
\!\!\left(\int _0^1\!\!a(t,\omega)\overline{e_k(t)} dW_t+ \int_0^1 \!\!b(t,\omega)\overline{e_k(t)} 
dt\right)\!\int_0^1\overline{e_{\ell}(t)}dW_t} \\
=\D{\int_0^1a(t,\omega)\overline{e_n(t)}dt} \quad \mathrm{in} \quad L^2(\Omega,dP).
\end{array} 
\end{equation*}
\end{theorem}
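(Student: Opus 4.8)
The plan is to expand each factor on the left-hand side by the product (integration-by-parts) formula for the Skorokhod integral, isolate a single ``diagonal'' term that already equals the target $\tilde{a}_n$ for \emph{every} admissible pair $(k,\ell)$, and then show that all the remaining terms disappear under the Ces\`aro mean $\D{\frac{1}{2N+1}\sum_{|\ell|\le N}}$ as $N\to\infty$. Throughout I write $\delta(u)=\int_0^1 u(t)\,dW_t$ for the Skorokhod integral and split each coefficient as $\F_k(dX)=\delta(a\,\overline{e_k})+\int_0^1 b(t,\omega)\overline{e_k(t)}\,dt=:A_k+B_k$, noting that $\int_0^1\overline{e_\ell(t)}\,dW_t=\delta(\overline{e_\ell})$ is an ordinary Wiener integral with deterministic integrand.

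First I would treat the product $A_k\,\delta(\overline{e_\ell})$. Since $\overline{e_\ell}$ is deterministic, $D_t\delta(\overline{e_\ell})=\overline{e_\ell(t)}$, so the product rule $\delta(u)\delta(v)=\delta\bigl(u\,\delta(v)\bigr)+\int_0^1 u(t)\,D_t\delta(v)\,dt$ (Nualart \cite{Nualart}) gives
\[
A_k\,\delta(\overline{e_\ell})=\delta\bigl(a\,\overline{e_k}\,\delta(\overline{e_\ell})\bigr)+\int_0^1 a(t,\omega)\,\overline{e_k(t)}\,\overline{e_\ell(t)}\,dt.
\]
On the constraint $k+\ell=n$ one has $\overline{e_k}\,\overline{e_\ell}=\overline{e_{k+\ell}}=\overline{e_n}$, so the last integral equals $\tilde{a}_n=\int_0^1 a(t,\omega)\overline{e_n(t)}\,dt$ \emph{independently of the pair}. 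Averaging the $2N+1$ copies therefore reproduces $\tilde{a}_n$ exactly, and the whole left-hand side becomes $\tilde{a}_n$ plus three error terms: the divergence piece $T_1^N=\delta\bigl(a\,\overline{e_n}\,G_N\bigr)$ with $G_N(t)=\D{\frac{1}{2N+1}\sum_{|\ell|\le N}}e_\ell(t)\,\delta(\overline{e_\ell})$, and two pieces coming from $B_k\,\delta(\overline{e_\ell})$ after one further integration by parts, $B_k\delta(\overline{e_\ell})=\delta(B_k\overline{e_\ell})+\int_0^1\overline{e_\ell(s)}\,D_sB_k\,ds$, where $D_sB_k=\int_0^1 D_sb(r,\omega)\overline{e_k(r)}\,dr$ (this is where $b\in\mathcal{L}^{2,1}$ enters).

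The crux is the identity $\D{\frac{1}{2N+1}\sum_{|\ell|\le N}}e_\ell(t)\overline{e_\ell(s)}=\frac{1}{2N+1}D_N(t-s)=:K_N(t,s)$, the \emph{normalized Dirichlet kernel}, whose $L^2([0,1]^2)$ norm is $(2N+1)^{-1/2}$ and whose convolution action on any $L^2$ function decays like $(2N+1)^{-1/2}$ by Cauchy--Schwarz. Writing $G_N(t)=\int_0^1 K_N(t,s)\,dW_s$ one finds $E[|G_N(t)|^2]=(2N+1)^{-1}$ uniformly in $t$; likewise both $B$-type errors reduce to integrals of $K_N$ against $b$ and $D_sb$ that are $O((2N+1)^{-1/2})$. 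Feeding these into the Skorokhod isometry (which produces an $L^2(dt)$ term plus a trace term in $D$) shows each error tends to $0$ in $L^2(\Omega)$, which delivers the stated formula.

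I expect the main obstacle to be precisely this last step: controlling the trace contributions of the Skorokhod isometry for the divergence term $T_1^N$. There the integrand $a\,\overline{e_n}\,G_N$ is a genuine product of the noncausal coefficient $a$ with a random field, so its stochastic derivative spawns both $D_sa\cdot G_N$ and $a\cdot D_sG_N=a\,\overline{e_n}(s)/(2N+1)$; bounding the resulting double integrals uniformly is exactly where the hypothesis $a\in\mathcal{L}^{2,2}$ (two square-integrable derivatives) is consumed, guaranteeing that the second-order kernels are square-integrable and are annihilated at rate $(2N+1)^{-1}$ by $K_N$. Once every error is shown to be $o(1)$ in $L^2(\Omega)$, the Bohr-product formula follows, and the analogous averaging recovers the Fourier data of $b$ so that both parameters are identified.
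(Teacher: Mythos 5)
Your overall skeleton --- expand each product by a Skorokhod product rule, observe that the resulting trace term equals $\int_0^1 a(t,\omega)\overline{e_n(t)}\,dt$ for \emph{every} pair $k+\ell=n$, and kill the remaining terms under the Ces\`aro average via the Dirichlet-kernel bound $\int_0^1|D_N(s-t)|^2\,dt=2N+1$ --- is the same as the paper's: Theorem \ref{th-2} is exactly the case $p=2$, $\alpha=2$ of Theorem \ref{thm:Main2} (since $\mathbb{D}_2^2(L^2([0,1]))=\mathcal{L}^{2,2}$, $\mathbb{D}_2^1(L^2([0,1]))=\mathcal{L}^{2,1}$, $\mathbb{D}_2^0=L^2(\Omega)$), and your handling of the $B_k$ terms and of the diagonal term is essentially the paper's. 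The gap is in the $A_k$ error term. You apply the product rule in the form $\delta(\overline{e_\ell})\delta(u)=\delta\bigl(\delta(\overline{e_\ell})\,u\bigr)+\int_0^1u(t)D_t\delta(\overline{e_\ell})\,dt$ with $u=a\overline{e_k}$, and then propose to estimate $T_1^N=\delta\bigl(a\overline{e_n}G_N\bigr)$ by the Skorokhod isometry. Both steps require the \emph{product of two random fields} $t\mapsto a(t,\omega)\overline{e_k(t)}\,\delta(\overline{e_\ell})$ (resp.\ $a\overline{e_n}G_N$, together with its stochastic derivative) to be square integrable on $[0,1]\times\Omega$. Since $a$ and $G_N$ are dependent, quantities such as $E\int_0^1|a(t)|^2|G_N(t)|^2dt$ and $E\int_0^1\!\int_0^1|D_sa(t)|^2|G_N(t)|^2\,ds\,dt$ can only be controlled through fourth moments of $a$ and $Da$ (e.g.\ $a\in\mathbb{D}_4^1$); the hypothesis $a\in\mathcal{L}^{2,2}$ supplies second moments of $a$, $Da$, $D^2a$ and nothing more. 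So under the stated hypotheses $T_1^N$ is not even known to be a well-defined $L^2$ Skorokhod integral, and the isometry bound is unavailable; your remark that $\mathcal{L}^{2,2}$ ``guarantees that the second-order kernels are square-integrable'' misidentifies what that hypothesis actually does.

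The paper's proof is built precisely to avoid this. In Proposition \ref{proposition:multiplication_p} the product rule is applied the other way around, with $F=\int_0^1a(s,\omega)\overline{e_k(s)}\,dW_s$ and the \emph{deterministic} integrand $\overline{e_\ell}$ (Lemma \ref{lemma:multiplicaton_Fdelta}), and the derivative term is then moved inside the divergence by a Fubini-type identity proved by duality against $G(\omega)\in\mathbb{D}_q^{-(\alpha-2)}(\mathbf{C})$. The resulting error terms \eqref{adWdW} and \eqref{DadsdW} contain no product of two random fields: they are estimated purely by the continuity estimates \eqref{estimate:dW} and \eqref{estimate:D} together with the kernel norm, giving $O((2N+1)^{-1/2})$. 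That is where $\mathcal{L}^{2,2}$ is consumed: the iterated stochastic integral \eqref{adWdW} costs \emph{two} applications of \eqref{estimate:dW}, i.e.\ two degrees of Sobolev smoothness of $a$, but never any moment beyond the second. Note that your $T_1^N$, correctly interpreted, is exactly the sum of \eqref{adWdW} and \eqref{DadsdW} (after transposing the roles of the two integration variables); to make your argument rigorous you must perform that expansion \emph{before} estimating, which is precisely the content of the paper's Proposition \ref{proposition:multiplication_p} --- the isometry applied to the unexpanded product cannot be justified from $a\in\mathcal{L}^{2,2}$ alone.
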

Here $\mathcal{L}^{2,2}$ is a subclass of $\mathcal{L}^{2,1}$.
See, for instance, Ogawa \& Uemura \cite{ogawa-uemura2}, Nualart \cite{Nualart} for the definitions of ${\mathcal L}^{2,2}$.
Since the set $\{\tilde{a}_n\}$ reconstructs $a(t,\omega)$, we obtain $\{\mathcal{F}_n(a)\}$ from $\{\tilde{a}_n\}$.
Thus we get $\{\tilde{b}_n\}$ from the equality $\tilde{b}_n=\mathcal{F}_n(dX)-\mathcal{F}_n(a)$, which reconstructs $b(t,\omega)$.

%
%%
%\begin{definition}
%We call $\{{\mathcal F}_n(dX),n \in {\bf Z}\}$ of the differential $dX$ with respect to the complete system of trigonometric functions by trigonometric stochastic Fourier coefficients.
%\end{definition}
%%
\

In this note, we study Theorem \ref{th-2} above under some mild conditions on $a(t,\omega)$ and $b(t,\omega)$. 
For example, we assume $E[(\int_0^1|a(t,\omega)|^2dt)^{p/2}]<\infty$ for some $p\in(1,2)$. 
Under this assumption, we cannot apply the Skorokhod integral to the integrand $a(t,\omega)$, since the theory of Skorokhod integrals deeply depends upon the Wiener It\^{o} decomposition of the integrand. 
Thus we will employ another stochastic calculus, that is, the Malliavin calculus.
In this framework, Skorokhod integral is extended to \textit{the divergence}, and we can apply it to the integrand $a(t,\omega)$ above or more general integrands.

The aim of the present note is to generalize Theorem \ref{th-2} above in the framework of Malliavin calculus.
In the next Section 2 we will sketch the preparations of Malliavin calculus only for the later use. 
Then we will give a statement and proof of our main result (Theorem \ref{thm:Main2}) in Section 3, after 
some modification to the definition of SFCs for further use.
Section 4 is devoted to 
the proof of the propositions by which the main theorem is verified. 
\section{Preliminaries}
In this section we first sketch the preparations of Malliavin calculus only for the later use. 
Refer, for instance, Ikeda and Watanabe \cite{IkedaWatanabe}, Shigekawa \cite{Shigekawa} for details.\par
Let $(\Omega, P)$ be the Wiener space, i.e.,
\begin{equation*}
\Omega = \{ W_t : [0,1] \to {\mathbf R} \,|\, W_t \text{~is~continuous~and~} W_0 = 0 \}
\end{equation*}
and $P$ is the standard Wiener measure. 
We denote the Cameron-Martin subspace of $\Omega$ by $H$.
Let $K$ be a separable Hilbert space and ${\mathbb D}_p^{\alpha}(K)\, ( 1< p <\infty, \alpha \in {\mathbb
R} )$  be Watanabe's Sobolev spaces of $K$-valued (generalized) Wiener functionals, i.e.,
%%
%--- Watanabe space (standard) ---%
 \begin{equation*}
  {\mathbb D}_p^{\alpha}(K) = ( I - L )^{-\alpha/2} L^p( \Omega\to K, dP )%
 \end{equation*}%
%---------------------------------%
%%
endowed with the norm $\|F(\omega)\|_{{\mathbb D}_p^{\alpha}(K)} = E[| ( I-L )^{\alpha/2} F(\omega) |_K^p]^{1/p}$,
where  $L$  denotes the Ornstein-Uhlenbeck operator and $|\cdot|_K$ the norm of $K$.
We note that ${\mathbb D}_p^0(K)= L^p( \Omega\to K, dP )$, ${\mathbb D}_2^1(L^2([0,1]))=\mathcal{L}^{2,1}$, ${\mathbb D}_2^2(L^2([0,1]))=\mathcal{L}^{2,2}$ and that ${\mathbb D}_p^{\alpha}(K)\subset{\mathbb D}_{p'}^{\alpha'}(K)$ holds if $p\ge p'$ and $\alpha\ge \alpha'$.
We also note that ${\mathbb D}_p^{\alpha}(K)$ is the completion of $\mathcal{P}(K)$ under the norm $\|\cdot\|_{{\mathbb D}_p^{\alpha}(K)}$, where $\mathcal{P}(K)$ denotes the set of $K$-valued polynomials on $\Omega$. %:
If $q$ is the conjugate exponent of $p$, i.e. $1/p+1/q=1$, then ${\mathbb D}_q^{-\alpha}(K)$ is the dual space of ${\mathbb D}_p^{\alpha}(K)$.
If $F(\omega) \in {\mathbb D}_p^{\alpha}(K)$ and $G(\omega) \in {\mathbb D}_q^{-\alpha}(K)$, the coupling ${}_{{\mathbb D}_p^{\alpha}(K)}\langle F(\omega), G(\omega)\rangle_{{\mathbb D}_q^{-\alpha}(K)}$ of $F(\omega)$ and $G(\omega)$ is defined by the following;
\begin{equation*}
{}_{{\mathbb D}_p^{\alpha}(K)}\langle F(\omega), G(\omega)\rangle_{{\mathbb D}_q^{-\alpha}(K)}=E[\langle ( I-L )^{\alpha/2} F(\omega), ( I-L )^{-\alpha/2} G(\omega)\rangle_{K}],
\end{equation*}
 $\langle\ast,\star\rangle_{K}$ denoting the inner product of $K$.
Let $D : {\mathbb D}_p^{\alpha}(K) \to {\mathbb D}_p^{\alpha-1}(H\otimes K)$ be the $H$-differentiation, and \textit{the divergence} operator $D^* : {\mathbb D}_q^{-(\alpha-1)}(H\otimes K) \to {\mathbb D}_q^{-\alpha}(K)$ be the dual operator of $D$, where $q$ is the conjugate exponent of $p$. 
Here $H\otimes K$ denotes the tensor product of $H$ and $K$, i.e., the totality of linear mappings from $H$ to $K$ of Hilbert-Schmidt type.
If $h(t,\omega)\in\mathbb{D}_p^{\alpha}(L^2([0,1]\to K,dt))$, then 
$$\mathsf{h}(\omega)=(I-L)^{-\alpha/2}\dint_0^{\cdot}(I-L)^{\alpha/2}h(t,\omega)dt\in \mathbb{D}_p^{\alpha}(H\otimes K).$$
We note that $\|\mathsf{h}\|_{{\mathbb D}_p^{\alpha}(H\otimes K)}=\|h(t,\omega)\|_{{\mathbb D}_p^{\alpha}(L^2([0,1],dt)\otimes K)}$.
We denote $D^*\mathsf{h}(\in{\mathbb D}_p^{\alpha-1}(K))$ by $\int_0^1h(t,\omega)dW_t$.
The following estimates are satisfied for some positive constants $C_1$ and $C_2$:
\allowdisplaybreaks{
\begin{align}
&\left\|\dint_0^1h(t,\omega)dW_t\right\|_{{\mathbb D}_p^{\alpha-1}(K)}\le C_1 \|h(t,\omega)\|_{{\mathbb D}_p^{\alpha}(L^2([0,1],dt)\otimes K)},\label{estimate:dW}\\
&\|DF(\omega)\|_{{\mathbb D}_p^{\alpha-1}(H\otimes K)}\le C_2\|F(\omega)\|_{{\mathbb D}_p^{\alpha}(K)}.\label{estimate:D}
\end{align}
}
We also note that the $H$-differentiation $D$ accompanies the operator $D_{\cdot}:\mathbb{D}_p^{\alpha}(K)\to\mathbb{D}_p^{\alpha-1}(L^2([0,1],dt)\otimes K)$ in the following manner:
\begin{equation*}
DF(\omega)[h]=(I-L)^{-(\alpha-1)/2}\int_0^1(I-L)^{(\alpha-1)/2}D_tF(\omega)h'(t)dt,
\end{equation*}
$F(\omega)\in{\mathbb D}_p^{\alpha}(K)$ and $h=\int_0^{\cdot}h'(t)dt\in H$.%

\begin{remark}\label{remark:int_hdt}
If $h(t,\omega)\in\mathbb{D}_p^{\alpha}(L^2([0,1]\to K,dt))$, then $\int_0^1h(t,\omega)dt$ is defined by $(I-L)^{-\alpha/2}\int_0^1(I-L)^{\alpha/2}h(t,\omega)dt$.
Indeed, let $\{ h_n\}\subset\mathcal{P}(L^2([0,1]\to K,dt))$ be a sequence converging to $h(t,\omega)$ in $\mathbb{D}_p^{\alpha}(L^2([0,1]\to K,dt))$.
Then
\begin{align*}
&\left\| \dint_0^1h_n(t,\omega)dt-(I-L)^{-\alpha/2}\dint_0^1(I-L)^{\alpha/2}h(t,\omega)dt\right\|_{\mathbb{D}_p^{\alpha}(K)}\\
&=\left\|\dint_0^1(I-L)^{\alpha/2}(h_n(t,\omega)-h(t,\omega))dt\right\|_{L^p(K)}\\
&\leqq\|h_n(t,\omega)-h(t,\omega)\|_{\mathbb{D}_p^{\alpha}(L^2([0,1]\to K,dt))}, 
\end{align*}
which tends to $0$ as $n$ goes to $\infty$.
Note that $\int_0^1h(t,\omega)dt=(I-L)^{-\alpha/2}\int_0^1(I-L)^{\alpha/2}h(t,\omega)dt$ if $\alpha\geqq 0$, since
\begin{align*}
\left\| \dint_0^1h_n(t,\omega)dt-\dint_0^1h(t,\omega)dt\right\|_{L^p(K)}
&\leqq\|h_n(t,\omega)-h(t,\omega)\|_{L^p(L^2([0,1]\to K,dt))}\\
&\leqq\|h_n(t,\omega)-h(t,\omega)\|_{\mathbb{D}_p^{\alpha}(L^2([0,1]\to K,dt))}.
\end{align*}
\end{remark}
\section{Stochastic Fourier Coefficients and Main Result}
Let $1<p<\infty$ and $\alpha\in \mathbf{R}$.
Suppose $a(t,\omega) \in \mathbb{D}_p^{\alpha}(L^2([0,1]\to\mathbf{C}))$ and $b(t,\omega)\in \mathbb{D}_p^{\alpha-1}(L^2([0,1]\to\mathbf{C}))$.
We consider the following stochastic process $X(t,\omega)$:
\begin{equation*} \label{eq:model}
dX_t=b(t,\omega)dt+a(t,\omega)dW_t.
\end{equation*}
We set 
\begin{equation*}\label{eq:differenceX}
X(A)=\dint_0^1\mathbf{1}_{A}(u)b(u,\omega)du+\dint_0^1\mathbf{1}_{A}(u)a(u,\omega)dW_u,
\end{equation*}
where $\mathbf{1}_{A}(u)$ denotes the indicator function of a measurable set $A$. \par
Let $\psi(t)$ be a square integrable function on $[0,1]$ such that 
$\esssup_{t \in [0,1]}|\psi(t)|$ $<\infty$.
We define the integral $\int_0^1 \psi(t)dX_t$ in the following manner:
Let $\{\psi_n(t)=\sum a^n_i\mathbf{1}_{A^n_i}(t)\}$ be step functions converging to $\psi(t)$ in measure, which satisfy $\sup_n\esssup_{t \in [0,1]}|\psi_n(t)|<\infty$.
Then $\int_0^1\psi_n(t)dX_t$ is defined by $\sum a^n_iX(A^n_i)$.
We define $\int_0^1 \psi(t)dX_t$ by the limit of $\int_0^1\psi_n(t)dX_t$ if it is independent of the choice of uniformly essentially bounded step functions converging in mean to $\psi(t)$.

We see that
\begin{equation*}
\dint_0^1 \psi(t)dX_t=\dint_0^1 \psi(t)b(t,\omega)dt+\dint_0^1 \psi(t)a(t,\omega)dW_t
\end{equation*}
holds in $\mathbb{D}_p^{\alpha-1}(\mathbf{C})$, because
\begin{align*}
&\left\|\dint_0^1 \psi_n(t)a(t,\omega)dW_t-\dint_0^1 \psi(t)a(t,\omega)dW_t\right\|_{\mathbb{D}_p^{\alpha-1}(\mathbf{C})}\\
& \leqq C \bigl\| (\psi_n(t)- \psi(t))a(t,\omega)\bigr\|_{\mathbb{D}_p^{\alpha}(L^2([0,1]\to\mathbf{C}))}\\
& = C E\biggl[\Bigl( \dint_0^1\bigl|(\psi_n(t)- \psi(t))(I-L)^{\alpha/2}a(t,\omega)\bigr|^2dt\Bigr)^{p/2}\biggr]^{1/p}\\
 & = C E\biggl[\Bigl( \dint_{|\psi_n(t)- \psi(t)|\leqq\epsilon}\bigl|(\psi_n(t)- \psi(t))(I-L)^{\alpha/2}a(t,\omega)\bigr|^2dt\\
 &\phantom{= C E\biggl[\Bigl(}+\dint_{|\psi_n(t)- \psi(t)|>\epsilon}\bigl|(\psi_n(t)- \psi(t))(I-L)^{\alpha/2}a(t,\omega)\bigr|^2dt\Bigr)^{p/2}\biggr]^{1/p}\\
&\leqq C E\biggl[\Bigl(\epsilon^2 \dint_0^1\bigl|(I-L)^{\alpha/2}a(t,\omega)\bigr|^2dt\\
&+\!\bigl(\sup_n\esssup|\psi_n(t)|+\esssup|\psi(t)|\bigr)\!\!\dint_{|\psi_n(t)- \psi(t)|>\epsilon}\!\bigl|(I-L)^{\alpha/2}a(t,\omega)\bigr|^2\!dt\Bigr)^{p/2}\biggr]^{1/p}
\end{align*}
which converges to $0$ as $n\to\infty$ and $\epsilon\to 0$, and
\begin{align*}
&\left\|\dint_0^1 \psi_n(t)b(t,\omega)dt-\dint_0^1 \psi(t)b(t,\omega)dt\right\|_{\mathbb{D}_p^{\alpha-1}(\mathbf{C})}\\
&= E\biggl[\Bigl( \dint_0^1\bigl|(\psi_n(t)- \psi(t))(I-L)^{(\alpha-1)/2}b(t,\omega)\bigr|^2dt\Bigr)^{p/2}\biggr]^{1/p}
\end{align*}
converges to $0$ by the same estimation as above.

For any orthonormal 
basis $\{\varphi_n(t)\}$ in $L^2([0,1])$ such that 
 $\esssup_{t \in [0,1]}|\varphi_n(t)|$ $< \infty$ for all $n$,
we define the stochastic Fourier coefficients $\{\mathcal{F}_n(dX)\}$ of the differential $dX$ as follows;
\begin{equation*}\label{eq:stieltjes integral}
\F_n(dX)=\int_0^1 \overline{\varphi_n(t)}dX_t, \ \ (
\overline{\varphi_n(t)}= \mbox{the complex conjugate of } \varphi_n(t)).
\end{equation*}
Our main result is as follows:
%%%
\begin{theorem} \label{thm:Main2}
Let $1<p<\infty$ and $\alpha\in \mathbf{R}$.
Suppose $a(t,\omega) \in \mathbb{D}_p^{\alpha}(L^2([0,1]\to\mathbf{C}))$ and $b(t,\omega)\in \mathbb{D}_p^{\alpha-1}(L^2([0,1]\to\mathbf{C}))$.
Then 
any noncausal It\^{o} process can be identified by the complete set of the SFC 
${\mathcal F}_n(dX)$ of its differential $dX$ with respect to the complete system of 
trigonometric functions $\{e_n(t), \ n \in {\bf Z}\}$ where $e_n(t)=e^{2\pi\sqrt{-1}nt}$. 
More precisely, the Fourier coefficient (in the usual sense) % 
of the coefficient $a(t,\omega)$ is determined by the following formula of Bohr product;
\begin{equation} \label{eq:Bohr2} 
\begin{array}{l}
\D{\lim_{N\rightarrow\infty}\frac{1}{2N+1}\sum_{k+\ell=n, |\ell|\leq N}\!\!
\left(\int _0^1a(t,\omega)\overline{e_k(t)}dW_t+\int_0^1b(t,\omega)\overline{e_k(t)} 
dt\right)\!\!\int_0^1\overline{e_{\ell}(t)}dW_t} \\
=\D{\int_0^1a(t,\omega)\overline{e_n(t)}dt} \quad \mathrm{in} \quad \mathbb{D}_p^{\alpha-2}(\mathbf{C}).
\end{array} 
\end{equation}
\end{theorem}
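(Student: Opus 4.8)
The plan is to expand each summand in \eqref{eq:Bohr2} through $\mathcal{F}_k(dX)=\int_0^1 a\overline{e_k}dW_t+\int_0^1 b\overline{e_k}dt$, so that the Bohr sum splits into an ``$a$-part'' assembled from $\left(\int_0^1 a\overline{e_{n-\ell}}dW_t\right)\int_0^1\overline{e_\ell}dW_t$ and a ``$b$-part'' assembled from $\left(\int_0^1 b\overline{e_{n-\ell}}ds\right)\int_0^1\overline{e_\ell}dW_t$. The engine of the argument is the integration-by-parts (product) formula for the divergence: for a functional $G$ and an integrand $h(t,\omega)$ in the relevant Watanabe spaces,
\begin{equation*}
G\int_0^1 h(t,\omega)dW_t=\int_0^1 Gh(t,\omega)dW_t+\int_0^1 (D_tG)h(t,\omega)dt,
\end{equation*}
an identity I would first establish (as one of the Section~4 propositions) in the present generality, where $\alpha$ may be negative and $G$ is itself a first-chaos Wiener integral; its validity in $\mathbb{D}_p^{\alpha-2}(\mathbf{C})$ follows by duality against smooth functionals together with the estimates \eqref{estimate:dW} and \eqref{estimate:D}.

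For the $a$-part I would apply this formula with $G=\int_0^1\overline{e_\ell}dW_t$ and $h=a\overline{e_{n-\ell}}$. Since $G$ lies in the first Wiener chaos, $D_tG=\overline{e_\ell(t)}$, and because $k+\ell=n$ one has $\overline{e_\ell(t)}\,\overline{e_{n-\ell}(t)}=\overline{e_n(t)}$, so the trace term collapses to
\begin{equation*}
\int_0^1 \overline{e_\ell(t)}\,a(t,\omega)\overline{e_{n-\ell}(t)}dt=\int_0^1 a(t,\omega)\overline{e_n(t)}dt,
\end{equation*}
which is independent of $\ell$. Summing over $|\ell|\le N$ and dividing by $2N+1$ therefore reproduces $\int_0^1 a\overline{e_n}dt$ exactly; this is the contribution that survives the averaging and gives the right-hand side of \eqref{eq:Bohr2}. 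The leftover divergence term of the $a$-part equals $\int_0^1 a(t,\omega)R_N(t,\omega)dW_t$ with $R_N(t,\omega)=\frac{1}{2N+1}\sum_{|\ell|\le N}\left(\int_0^1\overline{e_\ell}dW_s\right)\overline{e_{n-\ell}(t)}$; the orthonormality of the first-chaos integrals $\int_0^1\overline{e_\ell}dW_s$ combined with the orthogonality of $\{e_{n-\ell}\}$ yields $E\int_0^1|R_N(t,\omega)|^2dt=\frac{1}{2N+1}$, so $R_N\to0$, and I would show in Section~4 that $aR_N\to0$ in $\mathbb{D}_p^{\alpha-1}(L^2([0,1]\to\mathbf{C}))$, whence this divergence tends to $0$ in $\mathbb{D}_p^{\alpha-2}(\mathbf{C})$ by \eqref{estimate:dW}.

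For the $b$-part I would again integrate by parts, now with $G=\int_0^1 b(s,\omega)\overline{e_{n-\ell}(s)}ds$ and the deterministic integrand $h=\overline{e_\ell}$, splitting it into a divergence $\int_0^1\big(\frac{1}{2N+1}\sum_{|\ell|\le N}(\int_0^1 b\overline{e_{n-\ell}}ds)\,\overline{e_\ell}\big)dW_t$ and a trace term $\frac{1}{2N+1}\sum_{|\ell|\le N}\int_0^1\big(D_t\int_0^1 b(s,\omega)\overline{e_{n-\ell}(s)}ds\big)\overline{e_\ell(t)}dt$. In the divergence part Parseval's identity for the Fourier coefficients of $b$ forces the integrand's $L^2([0,1])$-norm to be $O((2N+1)^{-1})$ pathwise; in the trace part the kernel $\sum_{|\ell|\le N}\overline{e_{n-\ell}(s)}\overline{e_\ell(t)}$ is a modulated Dirichlet kernel, whose normalization by $(2N+1)^{-1}$ drives it to $0$ in $L^1$. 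Both estimates, phrased in the $\mathbb{D}_p^{\alpha-2}$-norm, are the remaining Section~4 propositions, and together with the surviving trace term of the $a$-part they yield \eqref{eq:Bohr2}. I expect the main obstacle to lie in these norm estimates rather than in the algebra: controlling the products of $a\in\mathbb{D}_p^{\alpha}$ and $b\in\mathbb{D}_p^{\alpha-1}$ against the first-chaos factors inside the fractional, possibly negative-order Watanabe spaces, and extracting the decisive $1/(2N+1)$ decay uniformly in $N$, is the technical heart of the proof.
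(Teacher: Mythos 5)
Your overall architecture matches the paper's: expand each summand of the Bohr product by the divergence integration-by-parts formula, note that the trace term $\int_0^1 a(t,\omega)\overline{e_k(t)}\,\overline{e_\ell(t)}\,dt=\int_0^1 a(t,\omega)\overline{e_n(t)}\,dt$ is independent of $\ell$ and survives the averaging, and show the remaining terms vanish as $N\to\infty$. Your $b$-part is essentially the paper's terms \eqref{bdsdW} and \eqref{Dbdsdt}, and your Parseval bound $O((2N+1)^{-1})$ for the divergence piece is even sharper than the paper's Cauchy--Schwarz bound $O((2N+1)^{-1/2})$ (modulo replacing ``pathwise'' by the same computation applied to $(I-L)^{(\alpha-1)/2}b$, which is legitimate since taking Fourier coefficients in $t$ commutes with $(I-L)^{(\alpha-1)/2}$; also, your $L^1$ remark on the trace piece would need sup-norm control of $D_tb(s,\omega)$, which you do not have --- the correct and immediate estimate pairs the $L^2(ds\,dt)$ norm of $(I-L)^{(\alpha-2)/2}D_tb(s,\omega)$ against $\frac{1}{2N+1}\|D_N\|_{L^2(ds\,dt)}=(2N+1)^{-1/2}$, as the paper does).

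The genuine gap is in your $a$-part remainder. You keep the first-chaos factor inside the divergence, writing the remainder as $\int_0^1 a(t,\omega)R_N(t,\omega)\,dW_t$ with $R_N(t,\omega)=\frac{1}{2N+1}\sum_{|\ell|\le N}\bigl(\int_0^1\overline{e_\ell}\,dW_s\bigr)\overline{e_{n-\ell}(t)}$, and propose to prove $aR_N\to 0$ in $\mathbb{D}_p^{\alpha-1}(L^2([0,1]\to\mathbf{C}))$. That step fails as stated. For general $\alpha\in\mathbf{R}$ (the theorem allows $\alpha<0$), $a$ is a generalized Wiener functional and the pointwise product of $a$ with the \emph{random} factor $R_N$ is not an operation available on $\mathbb{D}_p^{\alpha}$; it can be given meaning because $R_N$ is polynomial, but every such multiplication estimate costs integrability: since your only hypothesis is $a\in\mathbb{D}_p^{\alpha}$ while $R_N$ is unbounded (it merely has all moments), H\"older-type bounds place $aR_N$ only in $\mathbb{D}_{p'}^{\,\cdot}$ with $1/p'=1/p+1/r>1/p$. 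The identity $E\int_0^1|R_N|^2dt=(2N+1)^{-1}$, though correct, controls nothing about $aR_N$ by itself, as $a$ and $R_N$ are not independent. So your route yields \eqref{eq:Bohr2} at best in $\mathbb{D}_{p'}^{\alpha-2}(\mathbf{C})$ for every $p'<p$, strictly weaker than the theorem; the same defect afflicts your ``engine'' identity with random $h$, whose proof in negative-order spaces already requires making sense of the product $Gh$ there (the paper's Lemma~\ref{lemma:multiplicaton_Fdelta} needs only deterministic $e(t)$). The cure is exactly the further decomposition the paper carries out in Proposition~\ref{proposition:multiplication_p}: using $D_t\int_0^1 a\,dW_s=\int_0^1 D_ta\,dW_s+a(t,\omega)$ and a duality Fubini argument, the remainder splits into \eqref{adWdW} and \eqref{DadsdW}, in which $a$ and $D_ta$ are multiplied only by the \emph{deterministic} kernel $\overline{e_n(s)}D_N(s-t)$; then the boundedness of the divergence and of $D$ on the Watanabe scale (\eqref{estimate:dW}, \eqref{estimate:D}) together with $\int_0^1|D_N(s-t)|^2ds=2N+1$ give the bound $C(2N+1)^{-1/2}\|a\|_{\mathbb{D}_p^{\alpha}(L^2([0,1]\to\mathbf{C}))}$ in $\mathbb{D}_p^{\alpha-2}(\mathbf{C})$ with no loss in the exponent $p$.
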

%%%
\begin{remark}
We can obtain $\{\int_0^1b(t,\omega)\overline{e_n(t)}dt, n\in\mathbf{Z}\}$ from \eqref{eq:Bohr2} above. Indeed; \\
If $\{\int_0^1a(t,\omega)\overline{e_n(t)}dt, n\in\mathbf{Z}\}$ is obtained, then $a(t,\omega)$ can be reconstructed.
Thus we get $\{\int_0^1a(t,\omega)\overline{e_n(t)}dW_t, n\in\mathbf{Z}\}$. 
Since $\int_0^1b(t,\omega)\overline{e_n(t)}dt=\mathcal{F}_n(dX)-\int_0^1a(t,\omega)\overline{e_n(t)}dW_t$, we obtain $\{\int_0^1b(t,\omega)\overline{e_n(t)}dt, n\in\mathbf{Z}\}$ .
\end{remark}
To prove the theorem above, we need multiplication formulas as follows.
%%%
\begin{proposition}\label{proposition:multiplication_p}
Let $a(t,\omega)\in\mathbb{D}_p^{\alpha}(L^2([0,1]\to\mathbf{C}))\,\,(\alpha\in\mathbf{R})$ and $e(t)\in L^2([0,1],dt)$.
Then $\int_0^1a(t,\omega)dW_t\int_0^1e(t)dW_t$ exists in $\mathbb{D}_p^{\alpha-2}(\mathbf{C})$ and the following equality holds in $\mathbb{D}_p^{\alpha-2}(\mathbf{C})$.
\allowdisplaybreaks{
\begin{align*}%
\notag&\dint_0^1a(t,\omega)dW_t\dint_0^1e(t)dW_t=\dint_0^1\left(\dint_0^1a(s,\omega)dW_s\right)e(t)dW_t\\
&\qquad\qquad\qquad\qquad+\int_0^1\left(\dint_0^1D_ta(s,\omega)e(t)dt\right)dW_s+\dint_0^1 a(t,\omega)e(t)dt.
\end{align*}
}
\end{proposition}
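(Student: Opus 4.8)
The plan is to reduce the identity to the case of polynomial integrands and then pass to the limit, using the continuity estimates already recorded in the excerpt. Throughout write $\delta(a)=\int_0^1 a(t,\omega)\,dW_t$ for the divergence and $\delta(e)=\int_0^1 e(t)\,dW_t$ for the (Gaussian) Wiener integral of the deterministic $e$. The first step is to check that each of the three expressions on the right-hand side defines a bounded linear operator in $a$ from $\mathbb{D}_p^{\alpha}(L^2([0,1]\to\mathbf{C}))$ into $\mathbb{D}_p^{\alpha-2}(\mathbf{C})$, with operator norm controlled by a constant times $\|e\|_{L^2([0,1])}$. Indeed, since $e(t)$ is deterministic, $(I-L)^{\beta/2}$ commutes with multiplication by $e$, so the first term is estimated by applying \eqref{estimate:dW} twice and is bounded by $C_1^2\|e\|_{L^2}\|a\|_{\mathbb{D}_p^{\alpha}}$; the third term is handled through Remark \ref{remark:int_hdt} and the Cauchy--Schwarz inequality in $t$, giving a bound $\|e\|_{L^2}\|a\|_{\mathbb{D}_p^{\alpha}}$; and the second term is bounded by $C_1C_2\|e\|_{L^2}\|a\|_{\mathbb{D}_p^{\alpha}}$ by applying \eqref{estimate:D} to $D_\cdot a$, Cauchy--Schwarz against $e(t)\,dt$, and then \eqref{estimate:dW}. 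Hence the whole right-hand side is continuous in $a$.

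Next I would establish the identity for polynomial integrands. Let $\{a_n\}\subset\mathcal{P}(L^2([0,1]\to\mathbf{C}))$ converge to $a$ in $\mathbb{D}_p^{\alpha}(L^2([0,1]\to\mathbf{C}))$. For each polynomial $a_n$ both $\delta(a_n)$ and $\delta(e)$ are genuine random variables lying in every $L^r(\Omega)$, so their product is an honest element of $L^r(\Omega)$ and the classical calculus applies. Taking $F=\delta(a_n)$ and the deterministic process $e$ in the product rule $F\delta(e)=\delta(Fe)+\langle DF,e\rangle_H$, and then inserting the commutation relation $D_t\delta(a_n)=a_n(t,\omega)+\int_0^1 D_t a_n(s,\omega)\,dW_s$ (valid for smooth integrands), yields
\begin{align*}
\delta(a_n)\delta(e)
&=\int_0^1\Bigl(\int_0^1 a_n(s,\omega)\,dW_s\Bigr)e(t)\,dW_t
+\int_0^1 a_n(t,\omega)e(t)\,dt\\
&\quad+\int_0^1\Bigl(\int_0^1 D_t a_n(s,\omega)\,dW_s\Bigr)e(t)\,dt.
\end{align*}
A Fubini exchange of the deterministic $dt$-integration with the $dW_s$-integration in the last term---elementary for a polynomial $a_n$---rewrites it as $\int_0^1\bigl(\int_0^1 D_t a_n(s,\omega)e(t)\,dt\bigr)dW_s$, which is precisely the claimed identity for $a_n$.

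Finally I would pass to the limit. By the continuity of the three right-hand terms established above, their values at $a_n$ converge in $\mathbb{D}_p^{\alpha-2}(\mathbf{C})$ to their values at $a$; consequently the left-hand members $\delta(a_n)\delta(e)$ form a Cauchy sequence in $\mathbb{D}_p^{\alpha-2}(\mathbf{C})$. Its limit is, by definition, the product $\int_0^1 a(t,\omega)\,dW_t\int_0^1 e(t)\,dW_t$, which therefore exists in $\mathbb{D}_p^{\alpha-2}(\mathbf{C})$ and satisfies the asserted formula. Thus existence and the multiplication formula are proved simultaneously.

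The hard part will be the interpretation of the left-hand side when $\alpha-1<0$: there $\delta(a)$ is only a generalized Wiener functional, so its pointwise product with the Gaussian $\delta(e)$ has no a priori meaning, and the identity must be read as the \emph{definition} of that product. The essential point to verify is therefore that the common limit does not depend on the approximating sequence $\{a_n\}$, which follows at once from the linearity and the continuity of the three right-hand operators. A secondary technical nuisance is the Fubini interchange in the second term, which for general $a$ is obtained by passing to the limit from the polynomial case rather than proved directly.
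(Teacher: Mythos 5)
Your argument is sound in substance, but it is organized quite differently from the paper's. The paper factors the proof through a separate lemma (Lemma~\ref{lemma:multiplicaton_Fdelta}): the product rule $F(\omega)\int_0^1 e(t)\,dW_t=\int_0^1 F(\omega)e(t)\,dW_t+\int_0^1 D_tF(\omega)e(t)\,dt$ is first established for a general $F\in\mathbb{D}_p^{\alpha}(\mathbf{C})$ by polynomial approximation of $F$ alone; Proposition~\ref{proposition:multiplication_p} is then deduced by taking $F=\int_0^1 a(s,\omega)\,dW_s$, inserting the commutation relation $D_t\int_0^1a(s,\omega)\,dW_s=a(t,\omega)+\int_0^1D_ta(s,\omega)\,dW_s$ at the level of generalized functionals, and finally justifying the Fubini-type interchange $\int_0^1\left(\int_0^1D_ta(s,\omega)\,dW_s\right)e(t)\,dt=\int_0^1\left(\int_0^1D_ta(s,\omega)e(t)\,dt\right)dW_s$ by a duality computation against an arbitrary $G\in\mathbb{D}_q^{-(\alpha-2)}(\mathbf{C})$. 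You instead verify the whole identity for polynomial integrands $a_n$, where the product rule, the commutation relation and the Fubini exchange are all elementary, and pass to the limit once, using the continuity in $a$ of the three right-hand operators; your bounds $C_1^2\|e\|_{L^2}\|a\|_{\mathbb{D}_p^{\alpha}}$, $C_1C_2\|e\|_{L^2}\|a\|_{\mathbb{D}_p^{\alpha}}$ and $\|e\|_{L^2}\|a\|_{\mathbb{D}_p^{\alpha}}$ are correct and rest on the same estimates \eqref{estimate:dW}, \eqref{estimate:D} and Remark~\ref{remark:int_hdt} that the paper itself uses in the proof of Theorem~\ref{thm:Main2}. Your route buys a completely elementary treatment of the Fubini step (no duality pairing at all); the paper's route buys a reusable lemma (applied again for Proposition~\ref{proposition:multiplication2_p}) and, more importantly, keeps the left-hand side meaningful at every stage.

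That last point is the one place where your proposal falls short of the statement as written. You declare that for $\alpha-1<0$ the product $\int_0^1a\,dW_t\cdot\int_0^1e\,dW_t$ ``has no a priori meaning'' and must be read as \emph{defined} by the limit; but then the assertion that the product ``exists in $\mathbb{D}_p^{\alpha-2}(\mathbf{C})$'' becomes circular, since an existence claim has content only relative to a prior notion of product. In the Watanabe framework such a notion does exist: $\int_0^1e(t)\,dW_t$ is a Gaussian functional lying in every $\mathbb{D}_q^{\beta}$, and multiplication by such a smooth functional maps $\mathbb{D}_p^{\alpha-1}(\mathbf{C})$ continuously into $\mathbb{D}_r^{\alpha-1}(\mathbf{C})$ for every $r<p$ --- this is exactly what the paper's proof of Lemma~\ref{lemma:multiplicaton_Fdelta} invokes when it asserts that $F_n(\omega)\int_0^1e(t)\,dW_t$ converges to $F(\omega)\int_0^1e(t)\,dW_t$ in $\mathbb{D}_r^{\alpha}(\mathbf{C})$. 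Adding this one fact closes your argument: with it, $\int_0^1a_n\,dW_t\int_0^1e\,dW_t$ converges to the a priori product in $\mathbb{D}_r^{\alpha-2}(\mathbf{C})$, while by your estimates it also converges in $\mathbb{D}_p^{\alpha-2}(\mathbf{C})\subset\mathbb{D}_r^{\alpha-2}(\mathbf{C})$; the two limits coincide, so the a priori product indeed lies in $\mathbb{D}_p^{\alpha-2}(\mathbf{C})$ and satisfies the formula. Without that remark, what you have proved is an existence-and-uniqueness statement for your own ad hoc definition of the product rather than the proposition as stated.
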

%%%
%%
\begin{proposition}\label{proposition:multiplication2_p}
Let $b(t,\omega)\in\mathbb{D}_p^{\alpha}(L^2([0,1]\to\mathbf{C}))\,\,(\alpha\in\mathbf{R})$ and $e(t)\in L^2([0,1])$.
Then $\int_0^1b(t,\omega)dt\int_0^1e(t)dW_t$ exists in $\mathbb{D}_p^{\alpha-1}(\mathbf{C})$ and the following equality holds in $\mathbb{D}_p^{\alpha-1}(\mathbf{C})$.
\begin{equation*}
\dint_0^1\!b(t,\omega)dt\!\dint_0^1\!e(t)dW_t
=\dint_0^1\!\!\left(\dint_0^1\!b(s,\omega)ds\right)\!e(t)dW_t+\dint_0^1\!\!\!\dint_0^1\!D_tb(s,\omega)e(t)dtds.
\end{equation*}
%}
%
\end{proposition}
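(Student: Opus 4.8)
The plan is to read the claimed identity as the Leibniz (integration-by-parts) rule for the divergence, $D^{*}(Fu)=F\,D^{*}u-DF[u]$, equivalently $F\,D^{*}u=D^{*}(Fu)+DF[u]$, applied to the scalar functional $F(\omega)=\int_0^1 b(s,\omega)\,ds$ and the \emph{deterministic} element $u=\mathsf e:=\int_0^{\cdot}e(t)\,dt\in H$, and then to transport this identity, which is classical for polynomials, into $\mathbb{D}_p^{\alpha-1}(\mathbf{C})$ by density of $\mathcal P$ and the continuity estimates \eqref{estimate:dW} and \eqref{estimate:D}. This is the scalar-coefficient companion of Proposition \ref{proposition:multiplication_p}, and is simpler because only one divergence is present.

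First I would record the building blocks. By Remark \ref{remark:int_hdt} with $K=\mathbf{C}$, $F=\int_0^1 b(s,\omega)\,ds\in\mathbb{D}_p^{\alpha}(\mathbf{C})$. The factor $\int_0^1 e(t)\,dW_t$ is $D^{*}\mathsf e$ with $\mathsf e$ deterministic; being a first-chaos element it lies in $\mathbb{D}_{p'}^{\beta}(\mathbf{C})$ for every $p'$ and $\beta$. Since $\mathsf e$ is deterministic and $(I-L)^{\alpha/2}$ acts only in $\omega$, multiplication by $\mathsf e$ satisfies $\|F\mathsf e\|_{\mathbb{D}_p^{\alpha}(H)}=\|e\|_{L^2}\,\|F\|_{\mathbb{D}_p^{\alpha}(\mathbf{C})}$ (using the norm identity noted just before Remark \ref{remark:int_hdt} together with $|\mathsf e|_H=\|e\|_{L^2}$), and likewise pairing the $H$-slot of $DF\in\mathbb{D}_p^{\alpha-1}(H\otimes\mathbf{C})$ against the fixed $\mathsf e$ obeys $\|DF[\mathsf e]\|_{\mathbb{D}_p^{\alpha-1}(\mathbf{C})}\le\|e\|_{L^2}\,\|DF\|_{\mathbb{D}_p^{\alpha-1}(H\otimes\mathbf{C})}$.

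Next I would choose polynomials $b_n\in\mathcal P(L^2([0,1]\to\mathbf{C}))$ with $b_n\to b$ in $\mathbb{D}_p^{\alpha}$ and put $F_n=\int_0^1 b_n(s,\cdot)\,ds$, so that $F_n\to F$ in $\mathbb{D}_p^{\alpha}(\mathbf{C})$. For each polynomial the genuine (square-integrable) Leibniz identity
$$F_n\,D^{*}\mathsf e=D^{*}(F_n\mathsf e)+DF_n[\mathsf e]$$
holds. I then pass to the limit termwise in $\mathbb{D}_p^{\alpha-1}(\mathbf{C})$: by \eqref{estimate:dW} and the multiplication bound above, $D^{*}(F_n\mathsf e)=\int_0^1 F_n\,e(t)\,dW_t\to\int_0^1 F\,e(t)\,dW_t$; by \eqref{estimate:D} and the pairing bound, $DF_n[\mathsf e]\to DF[\mathsf e]$. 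Hence the left sides $F_n\,D^{*}\mathsf e$ are Cauchy and converge in $\mathbb{D}_p^{\alpha-1}(\mathbf{C})$; this convergence, being independent of the approximating sequence, is precisely the assertion that $\int_0^1 b\,dt\int_0^1 e\,dW_t$ exists in $\mathbb{D}_p^{\alpha-1}(\mathbf{C})$, and the identity survives in the limit.

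Finally I would identify the two limit terms with the right-hand side: $D^{*}(F\mathsf e)=\int_0^1\left(\int_0^1 b(s,\omega)\,ds\right)e(t)\,dW_t$, and, via the $D_{\cdot}$-representation of $D$ recalled in Section 2, $DF[\mathsf e]=\int_0^1 D_tF(\omega)\,e(t)\,dt$ with $D_tF=\int_0^1 D_tb(s,\omega)\,ds$, which Fubini rearranges into $\int_0^1\!\int_0^1 D_tb(s,\omega)e(t)\,dt\,ds$. The step I expect to be the main obstacle is making the commutation $D_t\int_0^1 b(s,\cdot)\,ds=\int_0^1 D_tb(s,\cdot)\,ds$ and the ensuing Fubini exchange rigorous for arbitrary $\alpha\in\mathbf{R}$ --- in particular for negative $\alpha$, where $F$ is a genuine generalized functional and the ``product'' has meaning only through the identity itself --- together with the careful tracking of the $(I-L)^{\pm\alpha/2}$ weights through each of the three convergences; the algebraic core, by contrast, is just the single Leibniz rule.
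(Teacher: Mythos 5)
Your reduction to the Leibniz rule $F\,D^{*}\mathsf{e}=D^{*}(F\mathsf{e})+DF[\mathsf{e}]$ with $F=\int_0^1 b(s,\omega)\,ds$, and your limit passage for it (polynomials $b_n\to b$, then \eqref{estimate:dW} for the divergence term and \eqref{estimate:D} for the derivative term), are sound and run parallel to the paper's first step, which simply substitutes this $F$ into Lemma~\ref{lemma:multiplicaton_Fdelta}. But your argument stops exactly where the real work begins: the identification
\begin{equation*}
DF[\mathsf{e}]=\int_0^1 D_t\Bigl(\int_0^1 b(s,\omega)\,ds\Bigr)e(t)\,dt
=\int_0^1\!\!\int_0^1 D_t b(s,\omega)e(t)\,dt\,ds,
\end{equation*}
i.e.\ the commutation of $D_{\cdot}$ with the Bochner integral $\int_0^1\cdot\,ds$ followed by the Fubini exchange. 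You assert it (``Fubini rearranges'') and then concede in your final sentence that making it rigorous for arbitrary, in particular negative, $\alpha$ is ``the main obstacle''. That concession is accurate, and an acknowledged obstacle is not a proof: for $\alpha<0$ neither $D_tF$ nor $D_tb(s,\cdot)$ is a pointwise-defined object, so the interchange is not a routine Fubini argument. This identification is in fact the \emph{entire} content of the paper's proof of the proposition: the paper establishes it by pairing against an arbitrary $G(\omega)\in\mathbb{D}_q^{-(\alpha-1)}(\mathbf{C})$, using the duality between $D$ and $D^{*}$ twice (first trading $D_t\int_0^1 b\,ds$ paired with $G\overline{e(t)}$ for $\int_0^1 b\,ds$ paired with $\int_0^1 G\overline{e(t)}\,dW_t$, then going back down to $D_tb(s,\omega)$), with Remark~\ref{remark:int_hdt} used to move the $(I-L)^{\pm}$ weights through the $dt$- and $ds$-integrals. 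As written, then, your proposal has a genuine gap at precisely the step that needs work.

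The gap is repairable inside your own scheme, and without any duality argument: do the identification \emph{before} passing to the limit. For each polynomial $b_n$ the identity $DF_n[\mathsf{e}]=\int_0^1\int_0^1 D_tb_n(s,\omega)e(t)\,dt\,ds$ holds elementarily, since on $\mathcal{P}$ the operator $D$, the $ds$-integral and Fubini all commute. Then pass to the limit on the right-hand side: $Db_n\to Db$ in $\mathbb{D}_p^{\alpha-1}(L^2([0,1],dt)\otimes L^2([0,1],ds)\otimes\mathbf{C})$ by \eqref{estimate:D}; contracting the $dt$-slot against the fixed $e\in L^2([0,1])$ is a bounded operation that commutes with $(I-L)^{(\alpha-1)/2}$; and $\int_0^1\cdot\,ds$ is continuous by Remark~\ref{remark:int_hdt}. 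Hence $\int_0^1\int_0^1 D_tb_n(s,\omega)e(t)\,dt\,ds\to\int_0^1\int_0^1 D_tb(s,\omega)e(t)\,dt\,ds$ in $\mathbb{D}_p^{\alpha-1}(\mathbf{C})$, and since the same sequence $DF_n[\mathsf{e}]$ also converges to $DF[\mathsf{e}]$, the two limits coincide --- the commutation you needed comes for free. With that paragraph added, your proof closes and becomes a legitimate, somewhat more elementary alternative (approximation instead of duality) to the paper's argument; without it, the stated identity does not follow.
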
%
Proofs of Propositions \ref{proposition:multiplication_p} and  \ref{proposition:multiplication2_p} are given in the next section. %
%%%
We also remark the following estimate on trigonometric functions (see, for instance, \cite{malliavin-thalmaier}).
\begin{proposition}\label{proposition:trigonometric}
Let $e_n(t)=e^{2\pi\sqrt{-1}nt}, n\in\mathbf{Z},$ be trigonometric functions.
Then we have
\begin{equation*}
\dsum_{k+\ell=n, |\ell|\le N}\overline{e_{\ell}(s)}\overline{e_k(t)}=\overline{e_n(t)}\dfrac{\sin(N+\frac12)\pi(t-s)}{\sin\frac{1}{2}\pi(t-s)}.
\end{equation*}
Set $D_N(t)=\sin(N+\frac12)\pi t/\sin\frac{1}{2}\pi t$.
Then it holds that
\begin{equation*}
\int_0^1|D_N(s-t)|^2dt = 2N+1
\end{equation*}
for every $s\in [0,1]$.
\end{proposition}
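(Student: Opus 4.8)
The plan is to establish the two assertions separately, both by elementary computation: the first via a finite geometric sum and the second via the orthonormality of the exponential system.

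For the summation formula I would first use the constraint $k+\ell=n$ to eliminate $k$, rewriting the left-hand side as a single sum over $\ell$ with $|\ell|\le N$ and $k=n-\ell$. Since $\overline{e_\ell(s)}=e^{-2\pi\sqrt{-1}\ell s}$ and $\overline{e_{n-\ell}(t)}=e^{-2\pi\sqrt{-1}(n-\ell)t}$, their product factors as
\[
\overline{e_\ell(s)}\,\overline{e_{n-\ell}(t)}=e^{-2\pi\sqrt{-1}nt}\,e^{2\pi\sqrt{-1}\ell(t-s)}=\overline{e_n(t)}\,e^{2\pi\sqrt{-1}\ell(t-s)},
\]
so that $\overline{e_n(t)}$ pulls out of the sum and what remains is the Dirichlet-type sum $\sum_{|\ell|\le N}e^{2\pi\sqrt{-1}\ell(t-s)}$. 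This is a finite geometric series in the ratio $z=e^{2\pi\sqrt{-1}(t-s)}$; summing it and then multiplying numerator and denominator by $z^{-1/2}$ converts the two endpoints into a ratio of sines, which is precisely the claimed kernel $\overline{e_n(t)}\,\sin(N+\tfrac12)\pi(t-s)/\sin\tfrac12\pi(t-s)$. The only delicate point is the removable singularity at $t=s$, where both sine factors vanish: there the sum is read off directly as its number of terms, giving the limiting value $2N+1$ in agreement with the closed form.

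For the $L^2$-identity I would exploit the expression just obtained. Taking $n=0$ above (or reading off the definition of $D_N$ directly) identifies $D_N(s-t)=\sum_{|\ell|\le N}e^{2\pi\sqrt{-1}\ell(s-t)}$. Expanding $|D_N(s-t)|^2=D_N(s-t)\overline{D_N(s-t)}$ as a double sum over $|\ell|,|m|\le N$ and integrating in $t$ over $[0,1]$, every off-diagonal term $\int_0^1 e^{2\pi\sqrt{-1}(\ell-m)(s-t)}dt$ vanishes by the orthogonality $\int_0^1 e^{-2\pi\sqrt{-1}(\ell-m)t}dt=\delta_{\ell m}$, while each of the $2N+1$ diagonal terms contributes $1$. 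Hence $\int_0^1|D_N(s-t)|^2dt=2N+1$, independently of $s$.

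I do not anticipate a genuine obstacle: both statements are classical facts about the Dirichlet kernel. The one place calling for a little care is the geometric-sum manipulation in the first part---correctly handling the half-angle factor and the $t=s$ singularity---after which the second part follows immediately from orthonormality (Parseval) rather than from any further estimate on the kernel.
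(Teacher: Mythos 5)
Your two-step argument is correct in substance, but there is nothing in the paper to compare it against: the authors do not prove Proposition~\ref{proposition:trigonometric} at all, they dispose of it with a citation (``see, for instance, \cite{malliavin-thalmaier}''). So your elementary proof --- eliminate $k$ via $k=n-\ell$, factor out $\overline{e_n(t)}$, sum the resulting geometric series, then obtain the $L^2$ identity by expanding $|D_N(s-t)|^2$ as a double exponential sum and invoking orthonormality of $\{e_\ell\}$ on $[0,1]$ --- supplies exactly what the paper leaves to the literature. Your choice to prove the second identity from the representation of $D_N(s-t)$ as the exponential sum $\sum_{|\ell|\le N}e^{2\pi\sqrt{-1}\ell(s-t)}$, rather than by integrating the sine ratio directly, is also the right one: it makes the computation trivial and manifestly independent of $s$, and these two facts (kernel $=$ exponential sum, squared $L^2$ norm $=2N+1$) are precisely what the proof of Theorem~\ref{thm:Main2} consumes.

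One point needs more care than you gave it. You assert that the geometric series produces ``precisely the claimed kernel''; literally, it does not. With ratio $z=e^{2\pi\sqrt{-1}(t-s)}$ one gets
\begin{equation*}
\sum_{|\ell|\le N}z^{\ell}
=\frac{z^{N+\frac12}-z^{-(N+\frac12)}}{z^{\frac12}-z^{-\frac12}}
=\frac{\sin\bigl((2N+1)\pi(t-s)\bigr)}{\sin\bigl(\pi(t-s)\bigr)},
\end{equation*}
whose sine arguments are twice those printed in the Proposition. The displayed formula, read literally, is the Dirichlet kernel for the period-$2$ system $e^{\pi\sqrt{-1}nt}$, and for that kernel the identity $\int_0^1|D_N(s-t)|^2dt=2N+1$ would actually fail, because the period-$2$ exponentials are not orthogonal over $[0,1]$ (cross terms with $\ell-m$ odd survive). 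In other words, the paper's statement contains a factor-of-two typo, and you have silently reproduced it while claiming exact agreement. Your method proves the correct statement --- and the corrected closed form is what the rest of the paper implicitly uses --- but a careful write-up should display the kernel as $\sin\bigl((2N+1)\pi(t-s)\bigr)/\sin\bigl(\pi(t-s)\bigr)$ (equivalently, note that the printed $D_N$ must be evaluated at $2(s-t)$) rather than assert that the geometric sum matches the formula as printed.
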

\begin{proof}[Proof of Theorem~\ref{thm:Main2}]
From Propositions \ref{proposition:multiplication_p}, \ref{proposition:multiplication2_p} and \ref{proposition:trigonometric}, we have
\allowdisplaybreaks{
\begin{align}\notag%
&\dfrac1{2N+1}\sum_{k+\ell=n, |\ell|\le N}\left(\dint _0^1a(t,\omega)\overline{e_k(t)} dW_t+ \dint_0^1 b(t,\omega)\overline{e_k(t)} dt\right)\int_0^1\overline{e_{\ell}(t)}dW_t\\\notag
&=\int_0^1a(t,\omega)\overline{e_n(t)}dt\\\label{adWdW}%
&\quad+\dfrac1{2N+1}\int_0^1\left(\int_0^1a(s,\omega)\overline{e_n(s)}D_N(s-t) dW_s\right)dW_t\\\label{DadsdW}%
&\quad+\dfrac1{2N+1}\int_0^1\left(\int_0^1D_ta(s,\omega)\overline{e_n(s)}D_N(s-t)ds\right)dW_t\\\label{bdsdW}%
&\quad+\dfrac1{2N+1}\dint_0^1\left(\dint_0^1b(s,\omega)\overline{e_n(s)}D_N(s-t)ds\right)dW_t\\\label{Dbdsdt}%
&\quad+\dfrac1{2N+1}\dint_0^1\dint_0^1D_tb(s,\omega)\overline{e_n(s)}D_N(s-t)dsdt
\end{align}
}
in $\mathbb{D}_p^{\alpha-2}(\mathbf{C})$.
All we have to prove is that each of \eqref{adWdW}, \eqref{DadsdW}, \eqref{bdsdW} and \eqref{Dbdsdt} converges to zero if $N$ tends to infinity.
Applying \eqref{estimate:dW} and Proposition \ref{proposition:trigonometric}, we have
\allowdisplaybreaks{
\begin{align*}
&\left\|\dfrac1{2N+1}\int_0^1\left(\int_0^1a(s,\omega)\overline{e_n(s)}D_N(s-t) dW_s\right)dW_t\right\|_{\mathbb{D}_p^{\alpha-2}(\mathbf{C})}\\
&\leqq C\dfrac1{2N+1}\left\|a(s,\omega)\overline{e_n(s)}D_N(s-t)\right\|_{\mathbb{D}_p^{\alpha}(L^2([0,1],dt)\otimes L^2([0,1],ds)\otimes\mathbf{C})}\\
&=C\dfrac1{2N+1}E\left[\left(\int_0^1\int_0^1|(I-L)^{\alpha/2}a(s,\omega)\overline{e_n(s)}D_N(s-t)|^2dtds\right)^{p/2}\right]^{1/p}\\
&\leqq C\sqrt{\dfrac{1 }{2N+1}}E\left[\left(\int_0^1|(I-L)^{\alpha/2}a(s,\omega)|^2ds\right)^{p/2}\right]^{1/p}\\
&= C\sqrt{\dfrac{1 }{2N+1}}\|a(s,\omega)\|_{\mathbb{D}_p^{\alpha}(L^2([0,1]\to\mathbf{C}))}.
\end{align*}
}
Applying \eqref{estimate:dW}, Remark \ref{remark:int_hdt}, Proposition \ref{proposition:trigonometric} and \eqref{estimate:D}, we have
\allowdisplaybreaks{
\begin{align*}
&\left\|\dfrac1{2N+1}\int_0^1\left(\int_0^1D_ta(s,\omega)\overline{e_n(s)}D_N(s-t)ds\right)dW_t\right\|_{\mathbb{D}_p^{\alpha-2}(\mathbf{C})}\\
&=\dfrac1{2N+1}\times\\
&\left\|\int_0^1\!\!\left((I-L)^{-(\alpha-1)}\!\!\int_0^1\!(I-L)^{(\alpha-1)/2}D_ta(s,\omega)\overline{e_n(s)}D_N(s-t)ds\!\right)\!dW_t\right\|_{\mathbb{D}_p^{\alpha-2}(\mathbf{C})}\\
&\leqq \dfrac{C}{2N+1}E\!\left[\!\left(\!\dint_0^1\!\left|\dint_0^1(I-L)^{(\alpha-1)/2}D_ta(s,\omega)\overline{e_n(s)}D_N(s-t)ds\right|^2dt\right)^{p/2}\right]^{1/p}\\
&\leqq \dfrac{C}{2N+1}E\left[\left(\dint_0^1\left(\dint_0^1\left|(I-L)^{(\alpha-1)/2}D_ta(s,\omega)\right|^2ds\right)\times\right.\right.\\
&\qquad\qquad\qquad\qquad\qquad\qquad\qquad\qquad\qquad\left.\left.\left(\dint_0^1|D_N(s-t)|^2ds\right)dt\right)^{p/2}\right]^{1/p}\\
&\leqq C\sqrt{\dfrac{1 }{2N+1}}E\left[\left(\dint_0^1\dint_0^1\left|(I-L)^{(\alpha-1)/2}D_ta(s,\omega)\right|^2dtds\right)^{p/2}\right]^{1/p}\\
&= C\sqrt{\dfrac{1 }{2N+1}}\left\|Da(s,\omega)\right\|_{\mathbb{D}_{p}^{\alpha-1}(L^2([0,1],dt)\otimes L^2([0,1];\mathbf{C}))}\\
&\leqq C\sqrt{\dfrac{1 }{2N+1}}\left\|a(s,\omega)\right\|_{\mathbb{D}_{p}^{\alpha}(L^2([0,1];\mathbf{C}))}.
\end{align*}
}
Applying \eqref{estimate:dW}, Remark \ref{remark:int_hdt} and Proposition \ref{proposition:trigonometric}, we have
\allowdisplaybreaks{
\begin{align*}
&\left\|\dfrac1{2N+1}\dint_0^1\left(\dint_0^1b(s,\omega)\overline{e_n(s)}D_N(s-t)ds\right)dW_t\right\|_{\mathbb{D}_p^{\alpha-2}(\mathbf{C})}\\
&\leqq C\dfrac1{2N+1} \left\|\int_0^1b(s,\omega)\overline{e_n(s)}D_N(s-t)ds\right\|_{\mathbb{D}_p^{\alpha-1}(L^2([0,1]:\mathbf{C}))}\\
&=\dfrac{C}{2N+1} E\left[\left(\dint_0^1\left|(I-L)^{(\alpha-1)/2}\int_0^1b(s,\omega)\overline{e_n(s)}D_N(s-t)ds\right|^2dt\right)^{p/2}\right]^{1/p}\\
&=\dfrac{C}{2N+1} E\left[\left(\dint_0^1\left|\int_0^1(I-L)^{(\alpha-1)/2}b(s,\omega)\overline{e_n(s)}D_N(s-t)ds\right|^2dt\right)^{p/2}\right]^{1/p}\\
&\leqq\dfrac{C}{2N+1} E\left[\left(\dint_0^1\int_0^1\left|(I-L)^{(\alpha-1)/2}b(s,\omega)\overline{e_n(s)}D_N(s-t)\right|^2dtds\right)^{p/2}\right]^{1/p}\\%
&\leqq C\sqrt{\dfrac{1 }{2N+1}} E\left[\left(\dint_0^1\left|(I-L)^{(\alpha-1)/2}b(s,\omega)\right|^2ds\right)^{p/2}\right]^{1/p}\\
&=C\sqrt{\dfrac{1 }{2N+1}} \|b(s,\omega)\|_{\mathbb{D}_p^{\alpha-1}(L^2([0,1]:\mathbf{C}))}.
\end{align*}
}
Applying Remark \ref{remark:int_hdt}, Proposition \ref{proposition:trigonometric} and \eqref{estimate:D}, we have
\allowdisplaybreaks{
\begin{align*}
&\left\|\dfrac1{2N+1}\dint_0^1\dint_0^1D_tb(s,\omega)\overline{e_n(s)}D_N(s-t)dsdt\right\|_{\mathbb{D}_p^{\alpha-2}(\mathbf{C})}\\
&=\dfrac1{2N+1}\times\\
&\quad\left\|(I-L)^{-(\alpha-2)/2}\!\dint_0^1\!\!\dint_0^1(I-L)^{(\alpha-2)/2}D_tb(s,\omega)\overline{e_n(s)}D_N(s-t)dsdt\right\|_{\mathbb{D}_p^{\alpha-2}(\mathbf{C})}\\
&=\dfrac1{2N+1}E\left[\left|\dint_0^1\dint_0^1(I-L)^{(\alpha-2)/2}D_tb(s,\omega)\overline{e_n(s)}D_N(s-t)dsdt\right|^p\right]^{1/p}\\
&\leqq\dfrac1{2N+1}E\left[\left(\dint_0^1\dint_0^1\left|(I-L)^{(\alpha-2)/2}D_tb(s,\omega)\right|^2dsdt\right)^{p/2}\times\right.\\
&\qquad\qquad\qquad\qquad\qquad\qquad\qquad\qquad\left.\left(\dint_0^1\dint_0^1|D_N(s-t)|^2dsdt\right)^{p/2}\right]^{1/p}\\
&\leqq\sqrt{\dfrac{1 }{2N+1}}E\left[\left(\dint_0^1\dint_0^1\left|(I-L)^{(\alpha-2)/2}D_tb(s,\omega)\right|^2dsdt\right)^{p/2}\right]^{1/p}\\
&=\sqrt{\dfrac{1 }{2N+1}}\left\|Db(s,\omega)\right\|_{\mathbb{D}_p^{\alpha-2}(L^2([0,1],dt)\otimes L^2([0,1]:\mathbf{C}))}\\
&\leqq\sqrt{\dfrac{1 }{2N+1}}\left\|b(s,\omega)\right\|_{\mathbb{D}_p^{\alpha-1}(L^2([0,1]:\mathbf{C}))}.
\end{align*}
}
Therefore each of \eqref{adWdW}, \eqref{DadsdW}, \eqref{bdsdW} and \eqref{Dbdsdt} converges to zero if $N$ tends to infinity, which completes the proof.
\end{proof}
\section{Proofs of Propositions}
%%%
To prove Propositions \ref{proposition:multiplication_p} and \ref{proposition:multiplication2_p}, we prepare the next lemma.
\begin{lemma}\label{lemma:multiplicaton_Fdelta}
Let $F(\omega) \in\mathbb{D}_p^{\alpha}(\mathbf{C})\,\, (\alpha\in\mathbf{R})$ and $e(t)\in L^2([0,1],dt)$.
Then $F(\omega)\int_0^1e(t)dW_t$ exists in $\mathbb{D}_p^{\alpha-1}(\mathbf{C})$ and the following equality holds in $\mathbb{D}_p^{\alpha-1}(\mathbf{C})$.
\begin{equation}\label{Fdelta}
 F(\omega)\int_0^1e(t)dW_t=\int_0^1F(\omega)e(t)dW_t+\dint_0^1D_tF(\omega)e(t)dt.
 \end{equation}
\end{lemma}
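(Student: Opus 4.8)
The plan is to prove the identity \eqref{Fdelta} first in the classical setting of polynomial functionals $F\in\mathcal{P}(\mathbf{C})$, where every object is a genuine (indeed, infinitely differentiable) square-integrable Wiener functional, and then to extend it to an arbitrary $F\in\mathbb{D}_p^{\alpha}(\mathbf{C})$ by density, using the continuity estimates \eqref{estimate:dW} and \eqref{estimate:D} together with Remark \ref{remark:int_hdt}. I note at the outset that since $e(t)$ is deterministic, $\int_0^1 e(t)dW_t=D^*\mathsf{e}$ with $\mathsf{e}=\int_0^{\cdot}e(s)ds\in H$ is a first-chaos Gaussian variable lying in every $\mathbb{D}_q^{\beta}(\mathbf{C})$; the only genuine issue in giving meaning to the product $F(\omega)\int_0^1 e(t)dW_t$ is that $F$ may be a \emph{generalized} functional when $\alpha<0$. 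Accordingly I will \emph{define} this product as the limit produced by the approximation, and \eqref{Fdelta} will exhibit it explicitly.

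For the polynomial step, fix $F,G\in\mathcal{P}(\mathbf{C})$ and write $\delta(e)=\int_0^1 e(t)dW_t$. Since $FG$ is again a polynomial, the adjointness of $D$ and $D^*$ together with the Leibniz rule $D(FG)=G\,DF+F\,DG$ gives (conjugations suppressed)
\[
E[\delta(e)\,FG]=E[\langle e,\,D(FG)\rangle_H]=E[G\langle e,DF\rangle_H]+E[F\langle e,DG\rangle_H].
\]
On the other hand $E[\delta(Fe)\,G]=E[\langle Fe,DG\rangle_H]=E[F\langle e,DG\rangle_H]$, so subtracting and using $\langle e,DF\rangle_H=DF[\mathsf{e}]=\int_0^1 D_tF\,e(t)dt$ leaves $E[\{F\delta(e)-\delta(Fe)-\int_0^1 D_tF\,e(t)dt\}\,G]=0$ for every polynomial $G$. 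As polynomials are dense, \eqref{Fdelta} holds for $F\in\mathcal{P}(\mathbf{C})$.

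To pass to the limit, choose polynomials $F_m\to F$ in $\mathbb{D}_p^{\alpha}(\mathbf{C})$, which is possible because $\mathbb{D}_p^{\alpha}(\mathbf{C})$ is the completion of $\mathcal{P}(\mathbf{C})$. Because $e(t)$ is deterministic, $(I-L)^{\alpha/2}$ commutes with multiplication by $e(t)$, so $\|(F_m-F)e\|_{\mathbb{D}_p^{\alpha}(L^2([0,1]\to\mathbf{C}))}=\|e\|_{L^2}\,\|F_m-F\|_{\mathbb{D}_p^{\alpha}(\mathbf{C})}$; combined with \eqref{estimate:dW} this yields $\int_0^1 F_m(\omega)e(t)dW_t\to\int_0^1 F(\omega)e(t)dW_t$ in $\mathbb{D}_p^{\alpha-1}(\mathbf{C})$. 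For the drift term I read $\int_0^1 D_tF\,e(t)dt$ as the coupling $DF[\mathsf{e}]$, and by Cauchy--Schwarz in $t$ followed by \eqref{estimate:D} one gets $\|\int_0^1 D_t(F_m-F)e(t)dt\|_{\mathbb{D}_p^{\alpha-1}(\mathbf{C})}\le\|e\|_{L^2}\,\|D(F_m-F)\|_{\mathbb{D}_p^{\alpha-1}(H\otimes\mathbf{C})}\le C_2\|e\|_{L^2}\,\|F_m-F\|_{\mathbb{D}_p^{\alpha}(\mathbf{C})}\to 0$. Hence the right-hand side of \eqref{Fdelta} written for $F_m$ converges in $\mathbb{D}_p^{\alpha-1}(\mathbf{C})$; since the polynomial identity forces $F_m\,\delta(e)$ to equal it, the products $F_m\,\delta(e)$ converge, we define $F\,\delta(e)$ as their limit, and \eqref{Fdelta} survives the passage to the limit.

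The delicate point, and the step I expect to require the most care, is exactly the treatment of the drift term when $\alpha<0$: there $D_tF\,e(t)$ is in general only $L^1$ in $t$ (a product of two $L^2$ functions), so it need not belong to $\mathbb{D}_p^{\alpha-1}(L^2([0,1]\to\mathbf{C}))$ and Remark \ref{remark:int_hdt} does not apply verbatim. The resolution is to interpret $\int_0^1 D_tF\,e(t)dt$ as the Hilbert--Schmidt coupling $DF[\mathsf{e}]$ of $DF\in\mathbb{D}_p^{\alpha-1}(H\otimes\mathbf{C})$ with $\mathsf{e}\in H$, which is controlled by $\|DF\|\,\|e\|_{L^2}$ through the Cauchy--Schwarz bound above, and to understand all products through the polynomial approximation rather than by naive pointwise multiplication of generalized functionals.
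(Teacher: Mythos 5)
Your proof is correct and follows essentially the same route as the paper's: approximate $F$ by polynomials $F_m$, invoke the classical identity for polynomials, and pass to the limit using \eqref{estimate:dW} for the Skorokhod term and \eqref{estimate:D} (via Cauchy--Schwarz in $t$) for the drift term. Two points of divergence are worth recording. First, you actually prove the polynomial identity by the duality/Leibniz computation, whereas the paper simply asserts it for $F_n\in\mathcal{P}(\mathbf{C})$; your version is more self-contained. Second, and more substantively, you treat the product $F(\omega)\int_0^1e(t)dW_t$ \emph{by definition} as the limit of $F_m(\omega)\int_0^1e(t)dW_t$ (well-defined because that limit equals the right-hand side of \eqref{Fdelta}, which depends only on $F$), whereas the paper gives the product an a priori meaning (multiplication by the first-chaos Gaussian $\int_0^1e(t)dW_t$ acting on $\mathbb{D}_p^{\alpha}$), proves convergence of the products only in the weaker space $\mathbb{D}_r^{\alpha}(\mathbf{C})$ for $r<p$, and then upgrades the resulting identity from $\mathbb{D}_r^{\alpha-1}(\mathbf{C})$ to $\mathbb{D}_p^{\alpha-1}(\mathbf{C})$ because the right-hand side already lies there. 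Your route avoids having to know that multiplication by a smooth Gaussian is continuous between Watanabe--Sobolev spaces with a loss in the integrability index, which is the (unstated) ingredient behind the paper's $\mathbb{D}_r^{\alpha}$ step; the small price is that when $\alpha\geq 0$, where the pointwise product already makes sense, you should add one line checking consistency --- e.g.\ $F_m\to F$ in $L^p$ and $\int_0^1e(t)dW_t\in\bigcap_q L^q$ give $F_m\int_0^1e(t)dW_t\to F\int_0^1e(t)dW_t$ in $L^r$, $r<p$, so your limit coincides with the ordinary product. Your closing remark on reading $\int_0^1D_tF(\omega)e(t)dt$ as the coupling $DF[\mathsf{e}]$ when $\alpha<0$ is exactly the right way to make the paper's drift-term limit rigorous, since Remark \ref{remark:int_hdt} indeed does not apply verbatim to the $L^1$-in-$t$ integrand $D_tF(\omega)e(t)$.
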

\begin{proof}%
Let $\{F_n(\omega)\}\subset\mathcal{P}(\mathbf{C})$ be a sequence converging to $F(\omega)$ in $\mathbb{D}_p^{\alpha}(\mathbf{C})$.
Then
\begin{equation*}
F_n(\omega)\dint_0^1e(t)dW_t=\dint_0^1F_n(\omega)e(t)dW_t+\dint_0^1D_tF_n(\omega)e(t)dt
\end{equation*}
holds. 
Since $F_n(\omega)$ converges to $F(\omega)$ in $\mathbb{D}_p^{\alpha}(\mathbf{C})$, $F_n(\omega)\int_0^1e(t)dW_t$ converges to $F(\omega)\int_0^1e(t)dW_t$  in $\mathbb{D}_r^{\alpha}(\mathbf{C})$ for any $r$ $(r<p)$.
Moreover, $\int_0^1F_n(\omega)e(t)dW_t$ converges to $\int_0^1F(\omega)e(t)dW_t$ in $\mathbb{D}_p^{\alpha-1}(\mathbf{C})$, and $D_tF_n(\omega)$ to $D_tF(\omega)$ in $\mathbb{D}_p^{\alpha-1}($ $L^2([0,1]:\mathbf{C}))$.
Therefore 
$$\dlim_{n\to\infty}\dint_0^1D_tF_n(\omega)e(t)dt=\dint_0^1D_tF(\omega)e(t)dt$$
in $\mathbb{D}_p^{\alpha-1}(\mathbf{C})$.
Since both $\mathbb{D}_r^{\alpha}(\mathbf{C})$ and $\mathbb{D}_p^{\alpha-1}(\mathbf{C})$ are subsets of $\mathbb{D}_r^{\alpha-1}(\mathbf{C})$, \eqref{Fdelta} holds in $\mathbb{D}_r^{\alpha-1}(\mathbf{C})$, while the right hand side of \eqref{Fdelta} belongs to $\mathbb{D}_p^{\alpha-1}(\mathbf{C})$.
Therefore \eqref{Fdelta} holds in $\mathbb{D}_p^{\alpha-1}(\mathbf{C})$.
\end{proof}
\begin{proof}[Proof of Proposition~\ref{proposition:multiplication_p}]
Since $D_t\int_0^1a(s,\omega)dW_s=\int_0^1D_ta(s,\omega)dW_s+a(t,\omega)$, we have
\allowdisplaybreaks{
\begin{align*}
&\dint_0^1a(s,\omega)dW_s\dint_0^1e(t)dW_t\\
&=\dint_0^1\left(\dint_0^1a(s,\omega)dW_s\right)e(t)dW_t
+\dint_0^1\left(\dint_0^1D_ta(s,\omega)dW_s\right)e(t)dt\\
&\quad+\dint_0^1a(t,\omega)e(t)dt
\end{align*}
}
in $\mathbb{D}_p^{\alpha-2}(\mathbf{C})$ from Lemma \ref{lemma:multiplicaton_Fdelta}. 
Thus all we have to do is to show that the following Fubini-type formula holds in  $\mathbb{D}_p^{\alpha-2}(\mathbf{C})$:
\allowdisplaybreaks{
\begin{equation*}
\dint_0^1\left(\dint_0^1D_ta(s,\omega)dW_s\right)e(t)dt
=\dint_0^1\left(\dint_0^1D_ta(s,\omega)e(t)dt\right)dW_s.
\end{equation*}
}
Let $G(\omega)\in\mathbb{D}_q^{-(\alpha-2)}(\mathbf{C})$.
Then
\allowdisplaybreaks{
\begin{align*}
&{}_{{}_{\mathbb{D}_p^{\alpha-2}(\mathbf{C})}^{}}^{}\left\langle \dint_0^1\left(\dint_0^1D_ta(s,\omega)dW_s\right)e(t)dt, G(\omega)\right\rangle{}_{_{\mathbb{D}_q^{-(\alpha-2)}(\mathbf{C})}^{}}^{}\\
&={}_{{}_{\mathbb{D}_p^{\alpha-2}(\mathbf{C})}^{}}^{}\left\langle (I-L)^{-(\alpha-2)/2}\dint_0^1(I-L)^{(\alpha-2)/2}\left(\dint_0^1D_ta(s,\omega)dW_s\right)e(t)dt,\right.\\
&\left.\phantom{ (I-L)^{-(\alpha-2)/2}\dint_0^1(I-L)^{(\alpha-2)/2}\left(\dint_0^1D_ta(s,\omega)dW_s\right)e(t)dt} G(\omega)\right\rangle{}_{_{\mathbb{D}_q^{-(\alpha-2)}(\mathbf{C})}^{}}^{}\\
&=E\left[\left(\dint_0^1(I-L)^{(\alpha-2)/2}\left(\dint_0^1D_ta(s,\omega)dW_s\right)e(t)dt\right)\overline{(I-L)^{-(\alpha-2)/2}G(\omega)}\right]\\
&=\dint_0^1E\left[\left((I-L)^{(\alpha-2)/2}\left(\dint_0^1D_ta(s,\omega)dW_s\right)e(t)\right)\overline{(I-L)^{-(\alpha-2)/2}G(\omega)}\right]dt\\
&=\dint_0^1{}_{{}_{\mathbb{D}_p^{\alpha-2}(\mathbf{C})}^{}}^{}\left\langle\left(\dint_0^1D_ta(s,\omega)dW_s\right)e(t),G(\omega)\right\rangle{}_{{}_{\mathbb{D}_q^{-(\alpha-2)}(\mathbf{C})}^{}}^{}dt\\
&=\dint_0^1\dint_0^1{}_{\mathbb{D}_p^{\alpha-1}(\mathbf{C})}\left\langle D_ta(s,\omega)e(t),D_sG(\omega)\right\rangle_{\mathbb{D}_q^{-(\alpha-1)}(\mathbf{C})}dsdt\\
&=\dint_0^1\dint_0^1E\left[\left((I-L)^{(\alpha-1)/2}D_ta(s,\omega)e(t)\right)\overline{(I-L)^{-(\alpha-1)/2}D_sG(\omega)}\right]dsdt\\
&=\dint_0^1E\left[\left(\dint_0^1(I-L)^{(\alpha-1)/2}D_ta(s,\omega)e(t)dt\right)\overline{(I-L)^{-(\alpha-1)/2}D_sG(\omega)}\right]ds\\
&=\dint_0^1{}_{{}_{\mathbb{D}_p^{\alpha-1}(\mathbf{C})}^{}}^{}\left\langle (I-L)^{-(\alpha-1)/2}\dint_0^1(I-L)^{(\alpha-1)/2}D_ta(s,\omega)e(t)dt, \right.\\
&\left.\phantom{(I-L)^{-(\alpha-1)/2}\dint_0^1(I-L)^{(\alpha-1)/2}D_ta(s,\omega)e(t)dt,}D_sG(\omega)\right\rangle{}_{{}_{\mathbb{D}_q^{-(\alpha-1)}(\mathbf{C})}^{}}^{}ds\\
&={}_{{}_{\mathbb{D}_p^{\alpha-2}(\mathbf{C})}^{}}^{}\left\langle\dint_0^1(I-L)^{-(\alpha-1)/2}\left(\dint_0^1(I-L)^{(\alpha-1)/2}D_ta(s,\omega)e(t)dt\right)dW_s,\right.\\
&\left.\phantom{\dint_0^1(I-L)^{-(\alpha-1)/2}\left(\dint_0^1(I-L)^{(\alpha-1)/2}D_ta(s,\omega)e(t)dt\right)dW_s} G(\omega)\right\rangle{}_{{}_{\mathbb{D}_q^{-(\alpha-2)}(\mathbf{C})}^{}}^{}\\
&={}_{{}_{\mathbb{D}_p^{\alpha-2}(\mathbf{C})}^{}}^{}\left\langle\dint_0^1\left(\dint_0^1D_ta(s,\omega)e(t)dt\right)dW_s, G(\omega)\right\rangle{}_{{}_{\mathbb{D}_q^{-(\alpha-2)}(\mathbf{C})}^{}}^{}
\end{align*}
}
which is the desired result.
\end{proof}
\begin{proof}[Proof of Proposition~\ref{proposition:multiplication2_p}]
Substituting $\int_0^1b(s,\omega)ds$ to $F(\omega)$ in \eqref{Fdelta}, we have
\allowdisplaybreaks{
\begin{align*}
&\dint_0^1b(t,\omega)dt\dint_0^1e(t)dW_t\\
&=\dint_0^1\left(\dint_0^1b(s,\omega)ds\right)e(t)dW_t+\dint_0^1D_t\left(\dint_0^1b(s,\omega)ds\right)e(t)dt.
\end{align*}
}
Therefore it is enough to show that
\begin{equation*}
\dint_0^1D_t\left(\dint_0^1b(s,\omega)ds\right)e(t)dt=\dint_0^1\dint_0^1D_tb(s,\omega)e(t)dtds
\end{equation*}
holds in $\mathbb{D}_p^{\alpha-1}(\mathbf{C})$.
Let $G(\omega)\in \mathbb{D}_q^{-(\alpha-1)}(\mathbf{C})$.
Then we have
\allowdisplaybreaks{
\begin{align*}
&{}_{{}_{\mathbb{D}_p^{\alpha-1}(\mathbf{C})}^{}}^{}\left\langle \dint_0^1D_t\left(\dint_0^1b(s,\omega)ds\right)e(t)dt, G(\omega)\right\rangle{}_{{}_{\mathbb{D}_q^{-(\alpha-1)}(\mathbf{C})}^{}}^{}\\
&=E\left[\left(\dint_0^1(I-L)^{(\alpha-1)/2}D_t\left(\dint_0^1b(s,\omega)ds\right)e(t)dt\right)\overline{(I-L)^{-(\alpha-1)/2}G(\omega)}\right]\\
&=\dint_0^1E\left[(I-L)^{(\alpha-1)/2}D_t\left(\dint_0^1b(s,\omega)ds\right)e(t)\overline{(I-L)^{-(\alpha-1)/2}G(\omega)}\right]dt\\
&=\dint_0^1{}_{{}_{\mathbb{D}_p^{\alpha-1}(\mathbf{C})}^{}}^{}\left\langle D_t\dint_0^1b(s,\omega)ds,G(\omega)\overline{e(t)}\right\rangle{}_{{}_{\mathbb{D}_q^{-(\alpha-1)}(\mathbf{C})}^{}}^{}dt\\
&={}_{{}_{\mathbb{D}_p^{\alpha}(\mathbf{C})}^{}}^{}\left\langle \dint_0^1b(s,\omega)ds,\dint_0^1G(\omega)\overline{e(t)}dW_t\right\rangle{}_{{}_{\mathbb{D}_q^{-\alpha}(\mathbf{C})}^{}}^{}\\
&=E\left[\left((I-L)^{\alpha/2}\dint_0^1b(s,\omega)ds\right)\overline{(I-L)^{-\alpha/2}\dint_0^1G(\omega)\overline{e(t)}dW_t}\right]\\
&=E\left[\left(\dint_0^1(I-L)^{\alpha/2}b(s,\omega)ds\right)\overline{(I-L)^{-\alpha/2}\dint_0^1G(\omega)\overline{e(t)}dW_t}\right]\\
&=\dint_0^1E\left[(I-L)^{\alpha/2}b(s,\omega)\overline{(I-L)^{-\alpha/2}\dint_0^1G(\omega)\overline{e(t)}dW_t}\right]ds\\
&=\dint_0^1{}_{{}_{\mathbb{D}_p^{\alpha}(\mathbf{C})}^{}}^{}\left\langle b(s,\omega),\dint_0^1G(\omega)\overline{e(t)}dW_t\right\rangle{}_{{}_{\mathbb{D}_q^{-\alpha}(\mathbf{C})}^{}}^{}ds\\
&=\dint_0^1\dint_0^1{}_{{}_{\mathbb{D}_p^{\alpha-1}(\mathbf{C})}^{}}^{}\left\langle D_tb(s,\omega),G(\omega)\overline{e(t)}\right\rangle{}_{{}_{\mathbb{D}_q^{-(\alpha-1)}(\mathbf{C})}^{}}^{}dtds\\
&=\dint_0^1\dint_0^1E\left[(I-L)^{(\alpha-1)/2}D_tb(s,\omega)\overline{(I-L)^{-(\alpha-1)/2}G(\omega)\overline{e(t)}}\right]dtds\\
&=E\left[\dint_0^1\dint_0^1(I-L)^{(\alpha-1)/2}D_tb(s,\omega)e(t)dtds\overline{(I-L)^{-(\alpha-1)/2}G(\omega)}\right]\\
&={}_{{}_{\mathbb{D}_p^{\alpha-1}(\mathbf{C})}^{}}^{}\left\langle \dint_0^1\dint_0^1D_tb(s,\omega)e(t)dtds, G(\omega)\right\rangle{}_{{}_{\mathbb{D}_q^{-(\alpha-1)}(\mathbf{C})}^{}}^{},
\end{align*}
}
which completes the proof.
\end{proof}
\section*{Acknowledgment}
This work was partially supported by JSPS KAKENHI Grant Number 25400135.
%
%
%  REFERENCES
%

\end{document}